\theoremstyle{plain}
\newtheorem{theorem}{Theorem}[section]
\newtheorem{lemma}[theorem]{Lemma}
\newtheorem{corollary}[theorem]{Corollary}
\theoremstyle{definition}
\newtheorem{definition}[theorem]{Definition}
\newtheorem{example}[theorem]{Example}
\theoremstyle{remark}
\newtheorem{remark}[theorem]{Remark}
\newtheorem{notation}[theorem]{Notation}
\renewcommand{\bar}{\overline}
\renewcommand{\tilde}{\widetilde}
\newcommand{\dge}{\rotatebox[origin=c]{45}{$\ge$}}
\newcommand{\uge}{\rotatebox[origin=c]{315}{$\ge$}}
\newcommand{\updots}{\hbox to1.65em{\rotatebox[origin=c]{45}{$\cdots$}}}
\newcommand{\dndots}{\hbox to1.65em{\rotatebox[origin=c]{315}{$\cdots$}}}
\newcommand{\lmd}[1]{\hbox to1.65em{$\hfill \lambda_{#1} \hfill$}}
\newcommand{\llmd}[2]{\hbox to1.65em{$ \lambda_{#2}^{(#1)}$}}
\newcommand{\diag}{\operatorname{diag}}
\newcommand{\lij}{\lambda_i^{(j)}}
\newcommand{\C}{\mathbb{C}}
\newcommand{\Q}{\mathbb{Q}}
\newcommand{\R}{\mathbb{R}}
\newcommand{\Z}{\mathbb{Z}}
\newcommand{\F}{\mathcal{F}}
\newcommand{\PP}{\mathcal{P}}
\newcommand{\D}{\mathcal{D}}
\newcommand{\FF}{\mathbf{F}}
\newcommand{\VV}{\mathbf{V}}
\newcommand{\n}{\mathbf{n}}
\newcommand{\kk}{\mathbf{k}}
\newcommand{\xx}{\mathbf{x}}
\newcommand{\yy}{\mathbf{y}}
\newcommand{\bw}{\mathbf{w}}
\newcommand\ee{\mathbf{e}}
\newcommand{\ur}{\raisebox{3pt}{$\to$}\!\raisebox{-2pt}{$\uparrow$}}
\numberwithin{equation}{section} \numberwithin{table}{section}
\begin{document}                                                                          

\title{On the $f$-vectors of Gelfand-Cetlin polytopes}

\author[B. An]{Byung Hee An}
\address{Center for Geometry and Physics, Institute for Basic Science (IBS), Pohang, Republic of Korea 37673}
\email{anbyhee@ibs.re.kr}

\author[Y. Cho]{Yunhyung Cho}
\address{Center for Geometry and Physics, Institute for Basic Science (IBS), Pohang, Republic of Korea 37673}
\email{yhcho@ibs.re.kr}

\author[J. S. Kim]{Jang Soo Kim}
\address{Department of Mathematics, Sungkyunkwan University, Suwon 440-746, South Korea}
\email{jangsookim@skku.edu}

\thanks{The first and the second authors were supported by IBS-R003-D1. The third author was partially supported by Basic Science Research Program through the
National Research Foundation of Korea (NRF) funded by the Ministry of Education (NRF-
2013R1A1A2061006).}

\date{\today}

\begin{abstract}
	A {\em Gelfand-Cetlin polytope}
	is a convex polytope obtained as an image of certain completely integrable system on a partial flag variety. 
	In this paper, we give an equivalent description of the face structure of a GC-polytope 
	in terms of so called {\em the face structure of a ladder diagram}. Using our description, we obtain a partial differential equation 
	whose solution is the exponential generating function of $f$-vectors of GC-polytopes. This solves the open problem (2) posed by Gusev, Kritchenko, and Timorin
	in \cite{GKT}.
\end{abstract}
\maketitle
\setcounter{tocdepth}{1} 
\tableofcontents

\section{Introduction and statement of results}
\label{secIntroduction}

Let us fix a positive integer $n$ and let $\n = (n_0, n_1, \cdots, n_r, n_{r+1})$ be a sequence of integers such that 
$
0 = n_0 < n_1 < n_2 < \cdots <n_r < n_{r+1} = n
$
for some $r > 0$. 
For a sequence $\lambda = (\lambda_1, \cdots, \lambda_n)$ of real numbers such that 
\[
	\lambda_1 = \cdots = \lambda_{n_1} > \lambda_{n_1 + 1} = \cdots = \lambda_{n_2} > \cdots > \lambda_{n_r +1} = 
	\cdots = \lambda_{n}, 
\]
the {\em Gelfand-Cetlin polytope}, or simply {\em the GC-polytope}, denoted by $\PP_\lambda$ is a convex polytope lying on $\R^{d}$ ($d = \frac{n(n-1)}{2}$)
consisting of points 
$(\lij)_{i,j} \in \R^{d}$ satisfying 
\[
\lambda_i^{(j+1)} \ge \lambda_{i}^{(j)}\ge\lambda_{i+1}^{(j)}, \quad 1 \leq i \leq n-1, \quad  1 \leq j \leq n - i
\]
where $\lambda_i^{(n-i+1)} := \lambda_i$ for all $i = 1,\cdots,n$.
Equivalently, $(\lij)_{i,j} \in \PP_\lambda$ if and only if it satisfies 
\begin{equation}
\begin{alignedat}{17}
  \lmd 1 &&&& \lmd 2 &&&& \lmd 3 && \cdots && \lmd {n-1} &&&& \lmd n  \\
  & \uge && \dge && \uge && \dge &&\dndots &&&& \uge && \dge & \\
  && \llmd {n-1}1 &&&& \llmd {n-2}2 &&&&&&&& \llmd{1}{n-1} && \\
  &&& \uge && \dge && \dndots &&&&&& \dge &&& \\
  &&&& \llmd {n-2}1 &&&&&&&& \llmd{1}{n-2} &&&& \\
  &&&&& \uge &&&&&& \dge &&&&& \\
  &&&&&& \dndots &&&& \updots &&&&&& \\
  &&&&&&& \uge && \dge &&&&&&& \\
  &&&&&&&& \llmd 11 &&&&&&&&& 
\end{alignedat}
\label{equation_GC-pattern}
\vspace{0.2cm}
\end{equation}
for $1 \leq i \leq n-1$ and $~ 1 \leq j \leq n - i $.

The theory of GC-polytopes has been studied from various aspects, such as the representation theory of $GL_n(\C)$ 
(\cite{GC}, \cite{GKT}, \cite{LMc}), and the geometry of Schubert varieties (\cite{Ki}, \cite{Ko}, \cite{KM}, \cite{KST}). 
In the context of toric geometry, 
GC-polytopes correspond to (very singular) projective toric varieties which can be regarded as toric degenerations of flag varieties. 
Thus to study of GC-polytopes in the sense of convex geometry is one of the natural way of understanding how to degenerate  
flag varieties to projective toric varieties, see \cite{HK}. 

However, the combinatorics of GC-polytopes seems to be not quite well-understood. 
Recently, Gusev, Kiritchenko, and Timorin \cite{GKT} studied the number of vertices of GC-polytopes. 
More precisely, they provided certain PDE system such that the solution is a power series with multi-variable $x = (x_1, \cdots, x_n)$ such that 
each coefficient of $x^I$, where $I$ is an multi-index, is the number of vertices of the GC-polytope corresponding to $I$, see Section 
\ref{ssecTheoremOfGusevKiritchenkoTimorin} for more details. 

This paper concerns the enumerative combinatorics on Gelfand-Cetlin polytopes, in particular a counting the number faces in each dimension. 
Also, we provides the answer for the open question posed in \cite[open problem (2) of page 968]{GKT}, see
Theorem \ref{theorem_global_generating_function} and  Remark \ref{remark_GKT}.

\subsection{Geometric aspects of GC-polytopes}
A GC-polytope is closely related to the geometry of a partial flag variety, see \cite{Ki}, \cite{Ko}, \cite{KM}, and \cite{NNU}. 
 Even though we do not use the theory of GC-polytopes on the algebraic nor geometric 
aspects in this paper, we briefly explain a connection between GC-polytopes and the geometry of partial flag varieties as we see below.

A {\em partial flag variety} $\F\ell(\n)$ is an example of a projective Fano variety defined by 
\[
	\F\ell(\n) = \{V_{\bullet} := 0 \subset V_1 \subset \cdots \subset V_r \subset \C^n ~|~ \dim_{\C} V_i = n_i \}.
\]
We can easily check that the linear $U(n)$-action on $\C^n$ induces a transitive $U(n)$-action on $\F\ell(\n)$ with the stabilizer isomorphic to 
$U(k_1) \times \cdots \times U(k_{r+1})$ where $k_i = n_i - n_{i-1}$ for $i=1,\cdots,r+1$. In other words, $\F\ell(\n)$ is diffeomorphic to a homogeneous space 
\[
	\F\ell(\n)  \cong U(n) / U(k_1) \times \cdots \times U(k_{r+1}).
\]

In the symplectic point of view, $\F\ell(\n)$ can be described as a co-adjoint orbit of $U(n)$ as follows. 
Let $U(n)$ be the set of $n \times n$ unitary matrices and let $\mathfrak{u}(n)$ be the Lie algebra of $U(n)$, which is 
the set of $n \times n$ skew-hermitian matrices. Then we may identify the dual vector space $\mathfrak{u}(n)^*$ 
with the set of $n \times n$ hermitian matrices $\mathcal{H} = i\mathfrak{u}(n)$ via the inner product 
\[
	\langle X, Y \rangle = \mathrm{tr}(XY)
\]
on $\mathcal{H}$ so that $\mathfrak{u}(n)^*$ with the co-adjoint $U(n)$-action is $U(n)$-equivariantly diffeomorphic to $\mathcal{H}$ 
with the conjugate action of $U(n)$, see \cite[page 51]{Au} for the detail.

Let $I_\lambda$ be the diagonal matrix $I_\lambda=\diag(\lambda_1,\dots,\lambda_n) \in \mathcal{H}$. 
Then the orbit of $I_\lambda$ for the conjugate $U(n)$-action, denoted by $\mathcal{O}_\lambda$, has a stabilizer isomorphic to 
$U(k_1) \times \cdots \times U(k_{r+1})$ and hence we get 
\[
	\mathcal{O}_\lambda \cong U(n) / U(k_1) \times \cdots \times U(k_{r+1}) \cong \F\ell(\n). 
\]
In particular, we have 
\[
\dim_{\R} \F\ell(\n) = n^2 - \sum_{i=1}^{r+1} k_i^2.
\]
Together with the Kirillov-Kostant-Souriau symplectic form $\omega_\lambda$ on the co-adjoint orbit $\mathcal{O}_\lambda$, we get a symplectic manifold
$(\mathcal{O}_\lambda, \omega_\lambda)$ diffeomophic to $\F\ell(\n)$. 
Then the GC-polytope $\PP_\lambda$ is equal to the image of the following map 
\[
	\begin{array}{ccccl}
		\Phi_\lambda & : & \F\ell(\n) & \rightarrow & \R^d\\
		                        &   &           X                & \mapsto  & (\lij(X))_{i,j}\\
	\end{array}
\]
where $\{ \lambda_i^{(j)} \}_{i + j = \ell \geq 2}$ are eigenvalues of $(\ell-1) \times (\ell-1)$ principal minor $X^{(\ell-1)}$ of $X \in \mathcal{H}$ satisfying 
\[
	\lambda_1^{(\ell-1)}(X) \geq \lambda_2^{(\ell-2)}(X) \geq \cdots  \geq \lambda_{\ell-1}^{(1)}(X)
\]
for each $\ell = 2,\cdots, n$. 
Guillemin and Sternberg \cite{GS} proved that the map $\Phi_\lambda$ is a completely integrable system on $(\mathcal{O}_\lambda, \omega_\lambda)$,
called a {\em Gelfand-Cetlin system}, see \cite{GS} for more details. 

\subsection{Ladder diagrams}
In this paper, we study a combinatorial structure on GC-polytopes. 
More precisely, we study the {\em face lattice} of $\PP_\lambda$, denoted by 
$\mathcal{F}(\PP_\lambda)$, which consists of all faces of $\PP_\lambda$ graded by their geometric dimensions, 
and is equipped with the order relation 
given by the relation of inclusion of faces of $\PP_\lambda$.

Our first aim is to describe the face lattice of a GC-polytope in terms of a {\em
ladder diagram}. 
To define a ladder diagram, we first define $Q^+$ to be the
  infinite directed graph  with vertex set 
  \[
  	V(Q^+) := \Z_{\ge0}\times\Z_{\ge0}, 
  \] such that $((i,j),(i',j'))$ is a directed edge if and only if $(i',j')=(i,j+1)$ or $(i',j')=(i+1,j)$.

%
%

\begin{definition}\label{definition_ladder_diagram}
	For a given positive integer $n$, let $\kk = (k_1, \cdots, k_{s})$ be a sequence of positive integers such that $\sum_{i=1}^s k_i = n$.  
	Let $n_i=\sum_{1 \leq j \leq i} k_j$ for $i = 1,\cdots, s$ with $n_0 = 0$ and let 
			\[
				T_\kk=\{ (n_0, n-n_0), (n_1, n-n_1),\dots,(n_{s},n-n_{s})\} \subset V(Q^+). 
			\]
	\begin{enumerate}
		\item 
{\em The ladder diagram} $\Gamma_\kk$ is defined as the induced subgraph of $Q^+$ with vertex set
			\[
				V(\Gamma_\kk) = \{ (a,b) \in V(Q^+) ~|~ a\le c, ~b\le d \text{ for some } (c,d)\in T_\kk \}.
			\]
In other words, for two vertices $(a,b)$ and $(c,d)$ of $\Gamma_\kk$,
$((a,b),(c,d))$ is an edge of $\Gamma_\kk$ if and only if it is an
edge of $Q^+$. 
		\item $(0,0) \in V(Q^+)$ is called the {\em origin of $\Gamma_\n$}.
		\item A vertex $v \in T_\kk$ is called a {\em terminal vertex} of $\Gamma_\kk$. 
		\item A vertex $v \in V(\Gamma_\kk)$ is called {\em extremal} if $v$ is either a terminal vertex or the origin, and {\em non-extremal} otherwise.
	\end{enumerate}
\end{definition}

\begin{example}\label{example_def_ladder_diagram}
	The graphs $Q^+$, $\Gamma_{(1,1,1,1,1,1)}$, and $\Gamma_{(2,2,2)}$ are given as follows. 
	
\[
\begingroup%
  \makeatletter%
  \providecommand\color[2][]{%
    \errmessage{(Inkscape) Color is used for the text in Inkscape, but the package 'color.sty' is not loaded}%
    \renewcommand\color[2][]{}%
  }%
  \providecommand\transparent[1]{%
    \errmessage{(Inkscape) Transparency is used (non-zero) for the text in Inkscape, but the package 'transparent.sty' is not loaded}%
    \renewcommand\transparent[1]{}%
  }%
  \providecommand\rotatebox[2]{#2}%
  \ifx\svgwidth\undefined%
    \setlength{\unitlength}{347.84012451bp}%
    \ifx\svgscale\undefined%
      \relax%
    \else%
      \setlength{\unitlength}{\unitlength * \real{\svgscale}}%
    \fi%
  \else%
    \setlength{\unitlength}{\svgwidth}%
  \fi%
  \global\let\svgwidth\undefined%
  \global\let\svgscale\undefined%
  \makeatother%
  \begin{picture}(1,0.30212275)%
    \put(0,0){\includegraphics[width=\unitlength,page=1]{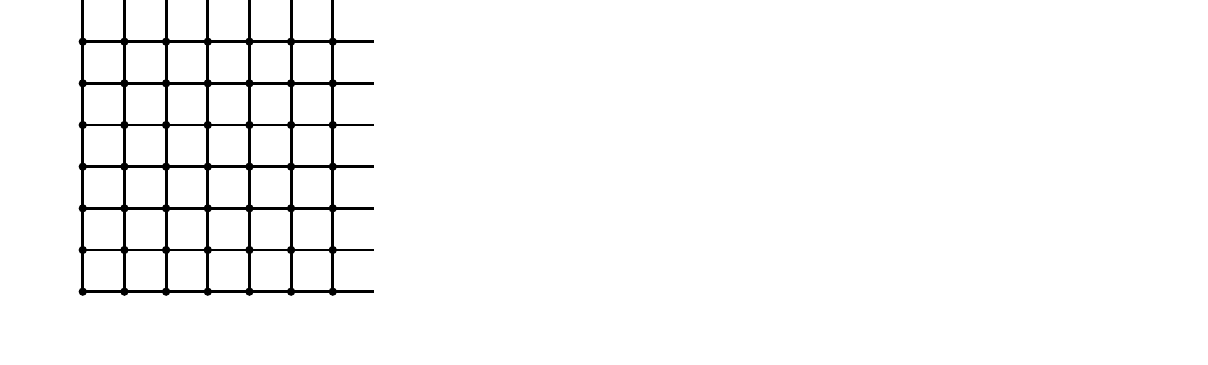}}%
    \put(0.0307534,0.03640139){\color[rgb]{0,0,0}\makebox(0,0)[lb]{\smash{$(0,0)$}}}%
    \put(0.16176599,0.00231352){\color[rgb]{0,0,0}\makebox(0,0)[lb]{\smash{$Q^+$}}}%
    \put(0,0){\includegraphics[width=\unitlength,page=2]{example_def_ladder_diagram.pdf}}%
    \put(0.25745853,0.03557992){\color[rgb]{0,0,0}\makebox(0,0)[lb]{\smash{$(6,0)$}}}%
    \put(-0.00087033,0.2639279){\color[rgb]{0,0,0}\makebox(0,0)[lb]{\smash{$(0,6)$}}}%
    \put(0.51701949,0.00313486){\color[rgb]{0,0,0}\makebox(0,0)[lb]{\smash{}}}%
    \put(0.46814653,0.00231338){\color[rgb]{0,0,0}\makebox(0,0)[lb]{\smash{$\Gamma_{(1,1,1,1,1,1)}$}}}%
    \put(0,0){\includegraphics[width=\unitlength,page=3]{example_def_ladder_diagram.pdf}}%
    \put(0.84598834,0.00313486){\color[rgb]{0,0,0}\makebox(0,0)[lb]{\smash{$\Gamma_{(2,2,2)}$}}}%
    \put(0,0){\includegraphics[width=\unitlength,page=4]{example_def_ladder_diagram.pdf}}%
  \end{picture}%
\endgroup%

\]
The red dots denote the terminal vertices for each graph.
\end{example}

\begin{remark}\label{remark_def_non_negative}
	Note that we defined a ladder diagram $\Gamma_\kk$ for a sequence $\kk$ of positive integers. 
	However, this definition of $\Gamma_\kk$ can be naturally extended for all sequences of non-negative integers such that 
	\[
		\Gamma_\kk := \Gamma_{\underline{\kk}}
	\]
	 where $\underline{\kk}$ is the maximal subsequence of $\kk$ whose components are all positive. 
\end{remark}

\begin{definition}[Definition 2.2.2 in \cite{BCKV}]
	A {\em positive path} on $\Gamma_\kk$ is a shortest path from the origin to a terminal vertex of $\Gamma_\kk$.
\end{definition}

\begin{definition}[A face structure on $\Gamma_\kk$] \label{definition_face}
	Let $\gamma$ be a subgraph of $\Gamma_\kk$. 
	\begin{enumerate}
		\item $\gamma$ is called a {\em face} of $\Gamma_\kk$ if 
			\begin{itemize}
				\item $V(\gamma)$ contains all terminal vertices of $\Gamma_\kk$, and 
				\item $\gamma$ can be presented as a union of positive paths.
			\end{itemize}
		\item For two faces $\gamma$ and $\gamma'$ of $\Gamma_\kk$, we say that $\gamma$ is a {\em face} of $\gamma'$ if $\gamma\subset\gamma'$.
		\item A {\em dimension} of a face $\gamma$ is defined by $\dim \gamma := \mathrm{rank}~ H_1(\gamma)$ by regarding $\gamma$ as a 
		one-dimensional CW-complex. In other words, $\dim\gamma$ is the number of minimal cycles in $\gamma$.
	\end{enumerate}
We denote by  $\mathcal{F}(\Gamma_\kk)$ the set of all faces of $\Gamma_\kk$. Then the face relation defined in (2) makes  $\mathcal{F}(\Gamma_\kk)$ a poset. In fact  $\mathcal{F}(\Gamma_\kk)$ is a lattice, see Remark~\ref{remark_intersection_of_face_need_not_face}. We call  $\mathcal{F}(\Gamma_\kk)$ the {\em face lattice} of $\Gamma_\kk$. 

\begin{remark}
	Let $\gamma$ be a face of $\Gamma_\kk$ and let $v$ be a non-extremal vertex in $V(\gamma)$. 
	Figure \ref{figure_jangsoo_1} illustrates the impossible types of the set of edges in $\gamma$ incident to $v$.  
\end{remark}

\end{definition}

Note that $\Gamma_\kk$ itself is a face of $\Gamma_\kk$ of maximal dimension, and we have
	\[
	\dim \Gamma_\kk = \mathrm{rank}~ H_1(\Gamma_\kk)=\sum_{1\le i<j\le s} k_i k_j = \frac{1}{2} \left( n^2 - \sum_{i=1}^s k_i^2 \right).
	\]	
\begin{example}\label{example_face_of_123}
	Let $\kk = (1,1,1)$. Then we can classify all faces of $\Gamma_\kk$ as in Figure \ref{figure_example_face_of_123}.
	\begin{figure}
		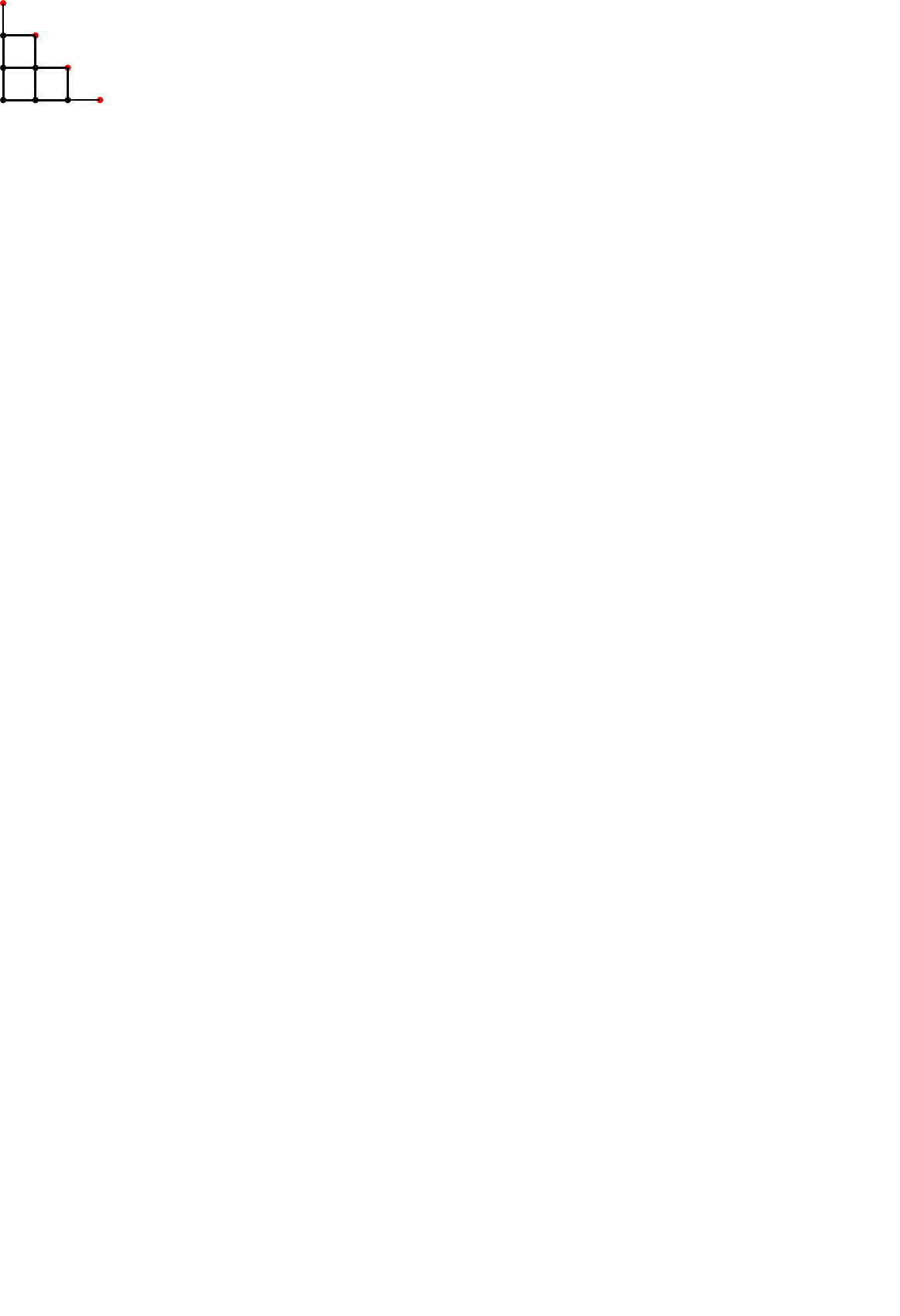
		\caption{\label{figure_example_face_of_123} The faces of $\Gamma_{(1,1,1)}$. }
	\end{figure}
	There are 7 faces of dimension zero, 11 faces of dimension one, 6 faces of dimension two, and 1 face of dimension three in $\Gamma_\kk$ as we see in Figure~\ref{figure_example_face_of_123}. 
	For faces $f_I$ and $f_J$ with $I, J \subset \{1,2,\cdots,7 \}$, 
	we can easily check that $f_I$ is a face of $f_J$ if and only if $I \subset J$. 
	In particular, we have $f_J = \cup_{j \in J} f_j$. 
\end{example}

\begin{remark}\label{remark_intersection_of_face_need_not_face}
	By definition, a union of faces of $\Gamma_\kk$ is again a face of $\Gamma_\kk$. In fact, if $\gamma_1, \cdots, \gamma_\ell$ are faces of $\Gamma_\kk$, 
	then $\cup_{i=1}^\ell \gamma_i$ is the smallest face containing all $\gamma_i$'s. Thus the union $\cup$ plays the role of the {\em join operator} $\vee$ for a lattice.
	On the other hand, the intersection of faces need not be a face. 
	For example,  $f_{123} \cap f_{357}$
	in Figure \ref{figure_example_face_of_123}
	cannot be expressed as a union of positive paths, and hence it is not a face of $\Gamma_\kk$ by Definition \ref{definition_face}.
	However, there is a unique maximal face $f_3$ contained in $f_{123}\cap f_{357}$. Thus 
	one can define the {\em meet} $\gamma \wedge \gamma'$ of two faces of $\Gamma_\kk$ as the maximal face contained in the intersection $\gamma \cap \gamma'$. 
	Then $\FF(\Gamma_\kk)$ becomes a lattice together with the join $\vee$ and the meet $\wedge$.
\end{remark}

%
The first part of our main theorem is as follows.
\begin{theorem}\label{theorem_one_to_one_faces}
	Let $\kk = (k_1, \cdots, k_2)$ be a sequence of positive integers and $\n = (n_0, \cdots, n_s)$ where $n_i = \sum_{j=1}^i k_j$ for $i=1,\cdots,s$ 
	with $n_0 = 0$. 
	Suppose that $\lambda = (\lambda_1, \dots, \lambda_n)$ is a sequence of real numbers satisfying 
	\[
		\lambda_1 = \cdots = \lambda_{n_1} > \cdots > \lambda_{n_{s-1} +1} = 
		\cdots = \lambda_{n_s}		
	\]
	Then there is an isomorphism $\phi$ between lattices 
\[
\phi : \mathcal{F}(\mathcal{P}_\lambda) \to \mathcal{F}(\Gamma_\kk)
\] 
such that $\dim\phi(F)=\dim F$ for all $F\in \mathcal{F}(\PP_\lambda)$. 
\end{theorem}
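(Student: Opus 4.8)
The plan is to construct $\phi$ concretely by recording, for a point in the relative interior of a face $F\in\mathcal{F}(\PP_\lambda)$, exactly which of the defining inequalities of $\PP_\lambda$ are satisfied with equality there, and then encoding this data as a subgraph of $\Gamma_\kk$. Since $\PP_\lambda$ is cut out by the inequalities displayed in \eqref{equation_GC-pattern}, each face is determined by its canonical \emph{tight set}, the collection of inequalities that hold with equality throughout $\operatorname{relint}F$: the whole polytope has empty tight set, and a vertex has a maximal one. I will show that this tight set translates into a face of $\Gamma_\kk$ and that the translation is a dimension-preserving lattice isomorphism.

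First I would fix a dictionary between the two sides. To each interior entry $\lij$ of the array \eqref{equation_GC-pattern} I associate one minimal cycle (unit cell) of $\Gamma_\kk$, and to each individual inequality $\lambda_i^{(j+1)}\ge\lij$ or $\lij\ge\lambda_{i+1}^{(j)}$ I associate one edge of $\Gamma_\kk$; after a $45^{\circ}$ rotation this is simply the identification of the triangular GC-pattern with the lattice-point array of $\Gamma_\kk$, the two types of inequality becoming the horizontal and vertical edges. The edges lying on the two unique monotone paths to the extreme corner terminals carry no inequality and appear in every face of $\Gamma_\kk$, while the remaining edges correspond bijectively to the facet inequalities of $\PP_\lambda$ (a count check, $n(n-1)$ on each side, confirms the matching). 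I then define $\phi(F)$ to be the subgraph of $\Gamma_\kk$ obtained by \emph{deleting} precisely the edges assigned to the tight inequalities of $F$ and keeping all others.

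The central step is to prove that $\phi(F)\in\mathcal{F}(\Gamma_\kk)$ and that $\phi$ is a bijection. For the forward direction I would check that the configuration of surviving edges around any non-extremal vertex is never one of the forbidden patterns of Figure~\ref{figure_jangsoo_1}: by the transitivity of the relations in \eqref{equation_GC-pattern}, any such pattern would force an additional equality not belonging to the tight set of $F$, a contradiction; hence $\phi(F)$ is a union of positive paths containing all terminal vertices. For injectivity and surjectivity I would establish \emph{realizability}: given any face $\gamma$ of $\Gamma_\kk$, one can assign real numbers to the entries of the pattern so that the inequalities assigned to the deleted edges hold with equality and all others are strict, building the values layer by layer and using the strict drops between consecutive $\lambda$-blocks to guarantee strictness wherever an edge of $\gamma$ survives. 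This recovers a unique face $F$ with $\phi(F)=\gamma$.

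It then remains to verify the structural properties. Order preservation is immediate from the tight-set description, since enlarging a face only relaxes equalities and hence only restores deleted edges, so $F\subseteq F'$ if and only if $\phi(F)\subseteq\phi(F')$; combined with the fact that unions play the role of joins (Remark~\ref{remark_intersection_of_face_need_not_face}), this makes $\phi$ a lattice isomorphism. For dimension, I would compute both sides by the same count: on the graph side $\dim\phi(F)=\mathrm{rank}\,H_1(\phi(F))$ equals $\dim\Gamma_\kk$ minus the number of cycle-independent deleted edges, while on the polytope side $\dim F$ equals $\dim\PP_\lambda$ minus the rank of the linear system formed by the tight inequalities; these agree because a deleted edge fails to lower $\mathrm{rank}\,H_1$ exactly when its equality is linearly dependent on the others, which is again read off from the local analysis. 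The main obstacle is precisely this pairing of the realizability construction with the dimension bookkeeping: one must show that \emph{every} union of positive paths through all terminals is realized by a genuine nonempty face of the correct dimension, and that the deleted edges are cycle-independent precisely when the corresponding equalities are independent. This local-to-global analysis around non-extremal vertices, together with the explicit construction of relative-interior points, is the technical heart of the argument.
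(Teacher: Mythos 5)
Your construction is essentially the paper's: the same map $\phi$ recording which defining inequalities of \eqref{equation_GC-pattern} are tight on a face (deleting the corresponding edges), the same exclusion of the forbidden local configurations of Figure~\ref{figure_jangsoo_1}, and the same realizability argument via an explicit relative-interior point built layer by layer (the paper takes midpoints $\frac12(\bar{x}_{i,j+1}+\bar{x}_{i+1,j})$ where both edges survive and proves strictness by induction on a lexicographically maximal failing vertex). The one point of divergence is the dimension count: where you propose matching the rank of the tight linear system against the number of cycle-independent deleted edges --- a workable but delicate piece of bookkeeping that you leave unverified --- the paper sidesteps it by observing that a poset isomorphism carries maximal chains to maximal chains, that $\dim\PP_\lambda=\dim\Gamma_\kk$, and that strict containment of faces of $\Gamma_\kk$ strictly decreases $\mathrm{rank}\,H_1$, which gives $\dim F=\dim\phi(F)$ with no rank computation at all.
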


Note that Theorem \ref{theorem_one_to_one_faces} is equivalent to say that there exists a bijective map 
	\[
			\{~\text{faces of}~\Gamma_\kk \} \stackrel{\phi} \longrightarrow \{~\text{faces of} ~\PP_\lambda\}
	\]
such that 
	\begin{enumerate}
		\item $\dim \phi(F) = \dim F$, and
		\item $F \subset F' \Leftrightarrow \phi(F) \subset \phi(F')$
	\end{enumerate}
	for every faces $F$ and $F'$ of $\PP_\lambda$. In particular, $\phi$ preserves the operators $\vee$ and $\wedge$. 

\begin{example}\label{example_GC_polytope_123}
	Let $\lambda = (2,1,0)$.
	Then $\PP_\lambda$ is given as follows. 
	\begin{figure}[H]
		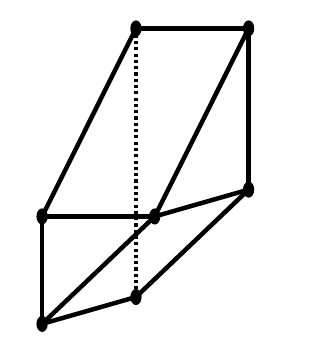
		\caption{\label{figure_GC_polytope_210}The GC-polytope $\PP_\lambda$ for $\lambda = (2,1,0).$}
	\end{figure}
	We label each vertex of $\PP_\lambda$ with $w_i$ for $i \in \{1,\cdots,7\}$ as given in Figure \ref{figure_GC_polytope_210}. 
	Similarly, we label each face of $\PP_\lambda$ with 
	$w_J$ for $J \subset \{1,\cdots,7\}$ such that $j \in J$ if and only if $w_J$ contains $w_j$. Then we can easily check that 
	\[
		\setlength\arraycolsep{2pt}
		\begin{array}{ccccc}
			\phi & : & \mathcal{F}(\Gamma_\kk) & \longrightarrow & \mathcal{F}(\PP_\lambda)\\
		                         & &      f_J         & \longmapsto & w_J
		\end{array}
	\]
	is an isomorphism where $f_J$ denotes a face of $\Gamma_\kk$ defined in \ref{example_face_of_123}.
\end{example}

\begin{remark}\label{remark_face_structure_depend_on_n}
	Note that Theorem \ref{theorem_one_to_one_faces} tells us that the face lattice
	 $\mathcal{F}(\PP_\lambda)$ of $\PP_\lambda$ depends only on $\kk$. 
\end{remark}

\subsection{Exponential generating functions of $f$-polynomials}
The second aim of this article is to study $f$-vectors of GC-polytopes by using Theorem \ref{theorem_one_to_one_faces}. 
%

Let $\kk$ be a sequence of non-negative integers and let $\Gamma_\kk$ be the corresponding ladder diagram in the sense of 
Remark \ref{remark_def_non_negative}.
Let $f_i(\kk)$ be the number of faces of $\Gamma_\kk$ of dimension $i$ for $i=0,1,\cdots, \dim \Gamma_\kk$.
We call $f(\kk) := (f_0(\kk), \cdots, f_{\dim \Gamma_\kk}(\kk))$ the {\em $f$-vector} of $\Gamma_\kk$. 
Then {\em the $f$-polynomial} $\FF_\kk(t)$ of $\Gamma_\kk$ is defined by 
\[
\FF_\kk(t) := \sum_{i=0}^{\dim \Gamma_\kk} f_i(\kk)  t^i =\sum_{\gamma \in \F(\Gamma_\kk)} t^{\dim \gamma},
\]
where $t$ is a formal parameter. 
In particular, the number of zero--dimensional faces 
of $\Gamma_\kk$, denoted by $\VV_\kk$, is equal to $\FF_\kk(0)$. 

For each positive integer $s$, we define the power series $\Psi_s$ 
in formal variables $x_1, \cdots, x_s,$ and $t$ as 
\begin{equation}
\Psi_0(t):=1,\qquad
\Psi_s(x_1,\dots,x_s ;t) := \sum_{k_1,\dots, k_s\ge0} \FF_{(k_1,\dots, k_s)}(t) \frac{x_1^{k_1}\cdots x_s^{k_s}}{k_1!\cdots k_s!}.
\end{equation}

For the sake of simplicity, we denote by 
\begin{equation}\label{equation_definition_global_generating_function}
	\Psi_s(\xx;t) = \sum_{\kk \in \Z^s_{\ge 0}} \FF_\kk(t) \frac{\xx^\kk}{\kk !}
\end{equation}
where $\xx = (x_1, \cdots, x_s)$,  
$\xx^\kk = x_1^{k_1}\cdots x_s^{k_s}$, and $\kk ! = k_1!\cdots k_s!$.
Then we can prove the following.
\begin{theorem}\label{theorem_global_generating_function}
The following equation
	\[
		\left( \D_s(\Psi_{2s-1}(\xx * \yy ; t))\right)|_{\yy = 0} = 0
	\]
	holds for every positive integer $s$
	where 
	\[
	\xx * \yy = (x_1, y_1, \cdots, x_{s-1}, y_{s-1}, x_s)
	\]
	for $\xx = (x_1, \cdots, x_s)$ and $\yy = (y_1, \cdots, y_{s-1})$, and
	\[
		\D_s = \displaystyle \frac{\partial^s}{\partial x_1 \cdots \partial x_s} - \prod_{i=1}^{s-1}\left (\frac{\partial}{\partial x_i} + \frac{\partial}{\partial x_{i+1}}
		+ t \cdot \frac{\partial}{\partial y_i}\right ).
	\]
\end{theorem}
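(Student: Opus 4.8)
The plan is to convert the stated differential equation into a single explicit family of recurrences for the $f$-polynomials $\FF_\kk(t)$, and then to prove that family by a direct decomposition of the faces of the ladder diagrams. By Theorem~\ref{theorem_one_to_one_faces} every $\FF_\kk(t)$ may be computed inside $\Gamma_\kk$, so the whole statement is really a statement about ladder diagrams. Since $\D_s$ has constant coefficients and, in an exponential generating function, each $\partial_{x_i}$ (resp.\ $\partial_{y_i}$) merely shifts the exponent of $x_i$ (resp.\ $y_i$) by one, I would extract the coefficient of $\frac{x_1^{k_1}\cdots x_s^{k_s}}{k_1!\cdots k_s!}$ from $\D_s(\Psi_{2s-1}(\xx*\yy;t))$ and then set $\yy=0$. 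The operator $\frac{\partial^s}{\partial x_1\cdots\partial x_s}$ yields $\FF_{(k_1+1,0,k_2+1,0,\dots,0,k_s+1)}(t)$, which equals $\FF_{(k_1+1,\dots,k_s+1)}(t)$ after deleting the zero entries by Remark~\ref{remark_def_non_negative}. Expanding $\prod_{i=1}^{s-1}(\partial_{x_i}+\partial_{x_{i+1}}+t\,\partial_{y_i})$ into $3^{s-1}$ monomials indexed by a choice $c=(c_1,\dots,c_{s-1})\in\{L,R,M\}^{s-1}$, the same bookkeeping shows that the PDE is equivalent, for every $\kk=(k_1,\dots,k_s)$, to the recurrence
\[
\FF_{(k_1+1,\dots,k_s+1)}(t)=\sum_{c\in\{L,R,M\}^{s-1}} t^{\,\#\{i:\,c_i=M\}}\;\FF_{\kk'(c)}(t),
\]
where $\kk'(c)$ is obtained from $(k_1,\dots,k_s)$ by, at each junction $i$, adding $1$ to block $i$ if $c_i=L$, adding $1$ to block $i+1$ if $c_i=R$, and inserting a new block of size $1$ between blocks $i$ and $i+1$ if $c_i=M$ (again deleting zero blocks via Remark~\ref{remark_def_non_negative}).

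The core of the argument is a bijective proof of this recurrence inside $\Gamma:=\Gamma_{(k_1+1,\dots,k_s+1)}$, where $n=s+\sum_i k_i$. Writing $V_i=(n_i,n-n_i)$ for the terminal vertices, one checks that the only vertices of $\Gamma$ on the antidiagonal $a+b=n$ are the terminals, so every face $\gamma$ meets this antidiagonal exactly in $\{V_0,\dots,V_s\}$. For each interior terminal $V_i$ $(1\le i\le s-1)$ the two possible incoming edges emanate from $u_i^-=(n_i-1,n-n_i)$ and $u_i^+=(n_i,n-n_i-1)$ on the antidiagonal $a+b=n-1$; since $V_i\in\gamma$ and $\gamma$ is a union of positive paths, at least one of these edges lies in $\gamma$, and this assigns to $\gamma$ a well-defined type $c_i$: only the vertical edge used $(c_i=L)$, only the horizontal edge used $(c_i=R)$, or both $(c_i=M)$. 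The plan is to show that deleting the antidiagonal $\{V_0,\dots,V_s\}$ together with all incoming edges sends a face of type $c$ bijectively to a face of the reduced ladder diagram $\Gamma_{\kk'(c)}$, the inverse being the reattachment of the $V_i$ along the recorded edges; truncating a positive path at $a+b=n-1$ produces a positive path of $\Gamma_{\kk'(c)}$, and conversely.

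Two points must be checked, and they carry the real content. First, that the truncated graph is genuinely a face of $\Gamma_{\kk'(c)}$ and not of a larger or smaller region: I would prove that $u_i^-$ (resp.\ $u_i^+$) lies in the truncation precisely when $c_i\in\{R,M\}$ (resp.\ $c_i\in\{L,M\}$), using that the unique up-right edge out of such a vertex inside $\Gamma$ runs to $V_i$, so an unused predecessor carries no positive path and is absent from $\gamma$. This identifies the reduced terminal set with exactly the terminals of $\Gamma_{\kk'(c)}$, and the degenerate interactions between adjacent corners (when some block $k_j+1$ equals $1$, forcing predecessors to coincide) are absorbed by Remark~\ref{remark_def_non_negative}. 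Second, the dimension bookkeeping: reattaching $V_i$ by a single edge adds a leaf and leaves $\mathrm{rank}\,H_1$ unchanged, whereas reattaching $V_i$ by both edges joins the two reduced terminals $u_i^-,u_i^+$—already connected through the origin—and creates exactly one new minimal cycle, so $\dim\gamma=\dim(\text{truncation})+\#\{i:c_i=M\}$, which is the source of the factor $t^{\,\#\{i:\,c_i=M\}}$.

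I expect the main obstacle to be the independence and compatibility of the local choices at the $s-1$ corners: proving that the type vector $c$ can be read off corner-by-corner, that the truncation lands in precisely $\Gamma_{\kk'(c)}$ for all $c$ simultaneously (so that the $3^{s-1}$ contributions reassemble into the full product $\prod_i(\partial_{x_i}+\partial_{x_{i+1}}+t\,\partial_{y_i})$), and that bijectivity persists through the degenerate cases where consecutive terminals become adjacent. Once this decomposition is established, summing $t^{\dim\gamma}$ over all faces of $\Gamma$ produces the displayed recurrence, and reversing the coefficient extraction of the first paragraph recovers the PDE, completing the proof.
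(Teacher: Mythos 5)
Your proposal is correct and follows essentially the same route as the paper: the coefficient extraction in your first paragraph reproduces the paper's Lemmas~\ref{lem2}--\ref{lem4} and the final computation, and your recurrence for $\FF_{(k_1+1,\dots,k_s+1)}(t)$ indexed by $c\in\{L,R,M\}^{s-1}$ is exactly the paper's Lemma~\ref{lem1} (with $W_{s-1}$ in place of $\{L,R,M\}^{s-1}$ and $r_\bw(\kk)*\tilde\bw$ in place of $\kk'(c)$), proved by the same truncation/reattachment bijection at the terminal antidiagonal with the same cycle-counting argument for the factor $t^{|\bw|}$. The subtleties you flag (coinciding predecessors when a block has size one, and the identification of the truncated terminal set) are precisely the points the paper handles via the graphs $g_\bw$, $V_{n-1}(g_\bw)$ and Lemma~\ref{lemma_next_terminal_vertices}.
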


\subsection{Theorem of Gusev-Kiritchenko-Timorin}
\label{ssecTheoremOfGusevKiritchenkoTimorin}
As a corollary of Theorem \ref{theorem_global_generating_function}, 
we obtain the following result proved by Gusev, Kiritchenko, and Timorin, see also \cite[Theorem 1.1]{GKT}. 

\begin{corollary}\cite{GKT}\label{corollary_GKT}
For $s\ge1$, let
	\[
		E_s(\xx) := \Psi_s(\xx ; 0) = \displaystyle \sum_{\kk \in \Z^s_{\ge 0}} \VV_\kk \frac{\xx^\kk}{\kk !}.
	\]
	where $\VV_\kk := \FF_\kk(0)$ is the number of vertices of $\Gamma_\kk$. 
Then $E_s(\xx)$ is a solution of the following partial differential equation
	\[
		\displaystyle \left(\frac{\partial^s}{\partial x_1 \cdots \partial x_s} - \prod_{i=1}^{s-1}\left (\frac{\partial}{\partial x_i} + \frac{\partial}{\partial x_{i+1}} \right ) 
		\right) E_s(\xx) = 0.
	\]
\end{corollary}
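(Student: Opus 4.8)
The plan is to obtain Corollary \ref{corollary_GKT} as the specialization $t = 0$ of Theorem \ref{theorem_global_generating_function}. Observe first that at $t = 0$ the operator $\D_s$ simplifies, since every summand $t\cdot\frac{\partial}{\partial y_i}$ inside the product vanishes; thus
\[
\D_s\big|_{t=0} = \frac{\partial^s}{\partial x_1 \cdots \partial x_s} - \prod_{i=1}^{s-1}\left (\frac{\partial}{\partial x_i} + \frac{\partial}{\partial x_{i+1}}\right ),
\]
which is precisely the operator appearing in the corollary. So it will suffice to show that the left-hand side of Theorem \ref{theorem_global_generating_function}, evaluated at $t = 0$, equals $\D_s|_{t=0}$ applied to $E_s(\xx)$.

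Next I would identify the restriction $\Psi_{2s-1}(\xx * \yy; t)\big|_{\yy = 0}$. Writing $\Psi_{2s-1}$ as a sum over $\kk = (k_1, \ldots, k_{2s-1}) \in \Z^{2s-1}_{\ge 0}$ with the substitution $\xx * \yy$, the odd-indexed slots carry the variables $x_1, \ldots, x_s$ and the even-indexed slots carry $y_1, \ldots, y_{s-1}$. Setting $\yy = 0$ annihilates every term in which an even-indexed $k$ is positive, leaving exactly the sequences of the form $(m_1, 0, m_2, 0, \ldots, 0, m_s)$. By Remark \ref{remark_def_non_negative} the ladder diagram, and hence the $f$-polynomial, depends only on the subsequence of positive entries, so $\FF_{(m_1, 0, \ldots, 0, m_s)}(t) = \FF_{(m_1, \ldots, m_s)}(t)$; since the interspersed zeros contribute factors $0! = 1$ to $\kk!$, this collapses the surviving sum to
\[
\Psi_{2s-1}(\xx * \yy; t)\big|_{\yy = 0} = \Psi_s(\xx; t),
\]
which at $t = 0$ is exactly $E_s(\xx)$.

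The one point that requires care, and which I expect to be the only genuine obstacle, is the order in which the differential operator and the restriction $\yy = 0$ are applied: in Theorem \ref{theorem_global_generating_function} the operator $\D_s$ --- which for $t \ne 0$ involves the $y$-derivatives $\frac{\partial}{\partial y_i}$ --- acts \emph{before} setting $\yy = 0$, and these two operations do not commute in general. The crucial observation is that at $t = 0$ all $y$-derivatives drop out, so $\D_s|_{t=0}$ involves only the $x$-variables and therefore commutes with the restriction $\yy = 0$. Consequently
\[
\left(\D_s(\Psi_{2s-1}(\xx * \yy; t))\right)\big|_{\yy = 0,\, t = 0} = \D_s\big|_{t=0}\left(\Psi_{2s-1}(\xx * \yy; 0)\big|_{\yy = 0}\right) = \D_s\big|_{t=0}\big(E_s(\xx)\big).
\]
Theorem \ref{theorem_global_generating_function} asserts that the left-hand side vanishes for every positive integer $s$ and all $t$, in particular at $t = 0$, so the corollary follows at once. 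No computation beyond this bookkeeping is required; the entire content is carried by Theorem \ref{theorem_global_generating_function}.
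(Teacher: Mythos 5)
Your proof is correct and takes essentially the same approach as the paper: both specialize Theorem \ref{theorem_global_generating_function} at $t=0$, observing that the operator degenerates to the corollary's operator (the paper writes $\D_s = \D'_s + t\cdot\D''_s$ and kills the second summand at $t=0$) and that $\Psi_{2s-1}(\xx*\yy;t)|_{\yy=0}=\Psi_s(\xx;t)$. Your explicit remark that only the $x$-derivative part survives at $t=0$ and hence commutes with the restriction $\yy=0$ is exactly the point the paper handles implicitly by splitting off $t\cdot\D''_s$ before setting $t=0$.
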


\begin{proof}
	For $s \ge 1$, let us denote by 
	\[
		\D'_s =\displaystyle \frac{\partial^s}{\partial x_1 \cdots \partial x_s} - \prod_{i=1}^{s-1}\left (\frac{\partial}{\partial x_i} + \frac{\partial}{\partial x_{i+1}} \right ) .
	\]
Then $\D_s = \D'_s + t \cdot \D''_s$ for some partial differential operator $\D''_s$. 
	Observe that 
	\begin{itemize}
		\item $\Psi_s(\xx ; 0) = E_s(\xx)$, 
		\item $\Psi_s(\xx ; t) = \Psi_{2s-1}(\xx * 0 ; t) = \Psi_{2s-1}(\xx * \yy ; t)|_{\yy = 0}$, and
		\item $\D'_s(\Psi_s(\xx ; t))|_{t = 0} = \D'_s(\Psi_s(\xx ; 0))$. 
	\end{itemize}
	Then by Theorem \ref{theorem_global_generating_function}, 
	we obtain
	\[
		\begin{array}{ccl}
			0 & = & \left( \D_s(\Psi_{2s-1}(\xx * \yy ; t))\right)|_{\yy = 0} \\
				& = & 	\left( \D'_s(\Psi_{2s-1}(\xx * \yy ; t))\right)|_{\yy = 0} + \left( 
			t \cdot \D''_s(\Psi_{2s-1}(\xx * \yy ; t))\right)|_{\yy = 0} \\
			& = & \D'_s(\Psi_s(\xx ; t)) + \left( 
			t \cdot \D''_s(\Psi_{2s-1}(\xx * \yy ; t))\right)|_{\yy = 0}\\
		\end{array}
	\]	
	for every $t \in \R$. Thus by substituting $t = 0$, we have 
	\[
		\D_s'(\Psi_s(\xx;t))|_{t = 0}  = \D'_s(\Psi_s(\xx ; 0)) = \D_s'(E_s(\xx)) = 0. 
	\]
	which completes the proof.
\end{proof}

\begin{remark}\label{remark_GKT}
Finding a partial differential equation whose solution is the exponential generating function of $f$-polynomials of GC-polytopes was an open problem posed by Gusev, Kiritchenko, and Timorin in \cite{GKT}.  
Thus Theorem \ref{theorem_global_generating_function} gives the answer for the problem.  
\end{remark}

This paper is organized as follows. In Section \ref{secProofOfTheoremRefTheoremonetoonefaces}, we give the proof of 
Theorem \ref{theorem_one_to_one_faces}. And in Section \ref{secProofOfTheoremRefTheoremglobalgeneratingfunction}, we give the proof of 
Theorem \ref{theorem_global_generating_function}. 

\section{Face lattices of ladder diagrams}
\label{secProofOfTheoremRefTheoremonetoonefaces}

In this section, we study face lattices of ladder diagrams defined in 
Section \ref{secIntroduction}
and prove Theorem \ref{theorem_one_to_one_faces}. 

Let us fix an integer $n>1$, a sequence
$\lambda = (\lambda_1, \dots, \lambda_n)$ of real numbers satisfying
	\[
		\lambda_1 = \cdots = \lambda_{n_1} > \lambda_{n_1 + 1} = \cdots = \lambda_{n_2} > \cdots > \lambda_{n_r +1} = 
		\cdots = \lambda_{n},
	\]
and a sequence $\kk = (k_1,\dots, k_{r+1})$ with $k_i=n_i-n_{i-1}$ for $i=1,2,\dots,r+1$, where $n_0 = 0$ and $n_{r+1} = n$. 

Let $I = \{(i,j) \in \Z^2 ~|~  i,j\ge 1, i+j\le n\}$ be an index set with $|I| = d := {n\choose 2}$. As in (\ref{equation_GC-pattern}), 
we denote the coordinates of $\R^d$ by $x_I = (x_{i,j})_{(i,j)\in I} \in \R^d$ so that 
$\PP_\lambda$ is written by 
\[
\PP_\lambda = \{x_{I} ~|~ x_{i,j+1}\ge x_{i,j}\ge
x_{i+1,j} \mbox{ for $(i,j)\in I$}\},
\]
where $x_{i,n+1-i} = \lambda_i$ for $i=1,2,\dots,n$. 

For each face $F\in \mathcal{F}(\PP_\lambda)$, let us define the subgraph $\phi(F)$ of $Q^+$ 
whose edge set is 
\begin{multline*}
E(\phi(F)) = \{((0,i),(0,i+1)) ~|~ 0\le i\le n-1\}  \cup 
\{((i,0),(i+1,0)) ~|~ 0\le i\le n-1\}  \\
\cup \{((i-1,j),(i,j)) ~|~ \mbox{ if there is a point $x_I \in F$ with $x_{i,j}<x_{i,j+1}$}\}\\
\cup \{((i,j-1),(i,j)) ~|~ \mbox{ if there is a point $x_I \in F$ with $x_{i,j}>x_{i+1,j}$}\},
\end{multline*}
and vertex set $V(\phi(F))$ is defined to be a subset of $V(Q^+)$ whose element is an endpoint of an edge in $E(\phi(F))$. 
See Figure \ref{figure_jangsoo_2} for an illustration of the possible sets of edges incident to the vertex
$(i,j) \in V(\Gamma_\kk)$ and the coordinates $x_{i,j}$'s for each $(i,j) \in I$ (cf. (\ref{equation_GC-pattern})).

\begin{figure}
	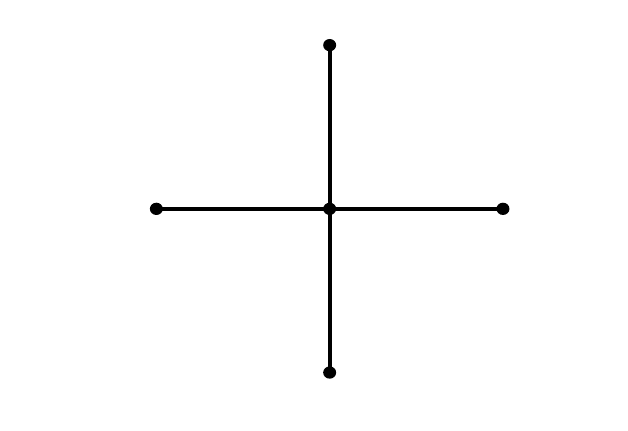
	\caption{\label{figure_jangsoo_2} Coordinates $x_I$ of $\R^d$}
\end{figure}

\begin{lemma}\label{lemma_subgraph_of_kk}
	$\phi(F)$ is a subgraph of $\Gamma_\kk$.
\end{lemma}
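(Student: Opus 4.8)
The plan is to split the assertion ``$\phi(F)$ is a subgraph of $\Gamma_\kk$'' into a vertex containment and an edge containment, and to observe that the edge containment is free. Indeed, by Definition~\ref{definition_ladder_diagram} the graph $\Gamma_\kk$ is the \emph{induced} subgraph of $Q^+$ on $V(\Gamma_\kk)$, and by construction every edge listed in $E(\phi(F))$ is a $Q^+$-edge (it joins a vertex to itself shifted by a unit vector). Hence any edge of $\phi(F)$ whose two endpoints lie in $V(\Gamma_\kk)$ is automatically an edge of $\Gamma_\kk$, and the whole lemma reduces to proving $V(\phi(F))\subseteq V(\Gamma_\kk)$. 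The vertices produced by the two boundary families are the axis points $(0,b)$ and $(a,0)$ with $0\le a,b\le n$, and these lie in $V(\Gamma_\kk)$ by choosing the terminal vertices $(n_0,n-n_0)=(0,n)$ and $(n_s,n-n_s)=(n,0)$ respectively. Every other edge has the form $((i-1,j),(i,j))$ or $((i,j-1),(i,j))$ with $(i,j)\in I$, so each of its endpoints $(a,b)$ satisfies $a+b\le n$, and if $a,b\ge1$ then $(a,b)\in I$. Thus it suffices to prove the contrapositive statement: if $(i,j)\in I$ and $(i,j)\notin V(\Gamma_\kk)$, then no edge of $\phi(F)$ is incident to $(i,j)$.

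The engine of the argument is a pair of ``squeezing'' chains read directly off the defining inequalities of $\PP_\lambda$. Following $x_{i,j}\le x_{i,j+1}\le\cdots$ up to the fixed boundary value $x_{i,n+1-i}=\lambda_i$, and $x_{i,j}\ge x_{i+1,j}\ge\cdots$ down to $x_{n+1-j,j}=\lambda_{n+1-j}$, one obtains
\[
\lambda_{n+1-j}\le x_{i,j}\le\lambda_i \qquad\text{for every point of }\PP_\lambda\text{ and every }(i,j)\in I .
\]
I would then translate the hypothesis $(i,j)\notin V(\Gamma_\kk)$ into a statement about the blocks determined by $\kk$. Since the $n_t$ are increasing with $n_0=0$ and $n_s=n$, the condition that no $n_t$ lies in the interval $[i,\,n-j]$ is equivalent to the existence of an index $t$ with $n_t<i\le n-j<n_{t+1}$. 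Consequently $i,i+1,\dots,n+1-j$ all lie in the single block $\{n_t+1,\dots,n_{t+1}\}$ on which $\lambda$ is constant, so $\lambda_i=\lambda_{n+1-j}$ and the squeeze forces $x_{i,j}=\lambda_i$ identically on $\PP_\lambda$.

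With $x_{i,j}$ frozen, the remaining task is to check that each of the (at most four) inequalities governing an edge incident to $(i,j)$ is in fact an equality on all of $\PP_\lambda$, so that none of these edges can appear in $E(\phi(F))$. Because $\lambda$ is constant on the entire block $\{n_t+1,\dots,n_{t+1}\}$, the same squeeze also freezes the neighbouring coordinates $x_{i,j+1}$, $x_{i+1,j}$, $x_{i+1,j+1}$ to the common value $\lambda_i$ whenever they are interior, which kills the edges $((i-1,j),(i,j))$, $((i,j-1),(i,j))$, $((i,j),(i+1,j))$ and $((i,j),(i,j+1))$. The step I expect to demand the most care is the boundary case $i+j=n$, where $(i,j)$ sits on the anti-diagonal: there $x_{i,j+1}$ and $x_{i+1,j}$ are no longer free coordinates but the fixed eigenvalues $\lambda_i$ and $\lambda_{i+1}$, and the two ``outward'' edges do not even exist since $(i+1,j),(i,j+1)\notin I$. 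One checks that $i+1$ still lies in the same block, so $\lambda_{i+1}=\lambda_i$, and the two genuinely present edges are again forced to be equalities. Combining the interior and boundary cases yields $(i,j)\notin V(\phi(F))$, which establishes $V(\phi(F))\subseteq V(\Gamma_\kk)$ and, with the earlier reduction, completes the proof.
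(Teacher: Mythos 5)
Your proof is correct and follows essentially the same route as the paper: both arguments locate a point $(i,j)\in I\setminus V(\Gamma_\kk)$ strictly between two consecutive terminal vertices, use the chained Gelfand--Cetlin inequalities to squeeze $x_{i,j}$ and its neighbours to the common value $\lambda_{n_t+1}$, and conclude that no edge of $\phi(F)$ can touch such a point. Your preliminary reduction to vertex containment via the induced-subgraph property, and your explicit treatment of the anti-diagonal case $i+j=n$, are just more detailed packagings of the same argument.
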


\begin{proof}
	It is enough to show that each edge of $E(\phi(F))$ is lying on $\Gamma_\kk$.
	Let $e = ((i-1,j), (i, j))$ be any horizontal edge in $Q^+$ not lying on $\Gamma_\kk$. Then by Definition \ref{definition_ladder_diagram}, 
	there is no terminal vertex $(c,d) \in T_\kk$ such that $ i \leq c$ and $j \leq d$. Equivalently, $(i,j) \not \in V(\Gamma_\kk)$ so that
	there exist consecutive terminal vertices $(n_\ell, n- n_\ell)$ and $(n_{\ell+1}, n - n_{\ell + 1})$ for some $0 \leq \ell \leq r$ such that 
	\[
		n_\ell < i <n_{\ell+1}, \quad \mathrm{and} \quad n - n_{\ell+1} < j <n - n_\ell.
	\]
	Then we have $x_{i+1,j} = x_{i,j} =x_{i,j+1} = \lambda_{n_\ell+1}$ by (\ref{equation_GC-pattern}) and hence 
	$e$ cannot be lying on $\phi(F)$ by definition of $\phi$, i.e., any edge of $\phi(F)$ is lying on $\Gamma_\kk$ 
	for any $F \in \F(\PP_\lambda)$. Similarly, we can easily see that the same argument holds for a vertical edge of $Q^+$ so that  
	$\phi(F)$ is a subgraph of $\Gamma_\kk$ for every $F \in \F(\PP_\lambda)$.
\end{proof}

\begin{lemma}\label{lemma_contain_top_origin}
	$\phi(F)$ contains every terminal vertex and the origin of $\Gamma_\kk$.
\end{lemma}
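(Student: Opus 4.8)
The plan is to verify the two defining conditions separately, using the explicit description of $E(\phi(F))$ given above together with the combinatorial geometry of $\PP_\lambda$. The first two lines in the definition of $E(\phi(F))$ are hardwired: they force the entire left boundary edges $((0,i),(0,i+1))$ and the entire bottom boundary edges $((i,0),(i+1,0))$ into $\phi(F)$, independently of $F$. Since the origin $(0,0)$ is an endpoint of both $((0,0),(0,1))$ and $((1,0),(0,0))$, it lies in $V(\phi(F))$ automatically; this disposes of the origin immediately.

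For the terminal vertices, recall that each terminal vertex of $\Gamma_\kk$ has the form $(n_\ell, n-n_\ell)$ for $\ell = 0, 1, \dots, s$, where the extreme cases $\ell = 0$ and $\ell = s$ give $(0,n)$ and $(n,0)$. First I would observe that $(0,n)$ and $(n,0)$ are endpoints of the boundary edges already placed in $\phi(F)$, so they require no further argument. For an interior terminal vertex $(n_\ell, n-n_\ell)$ with $1 \le \ell \le s-1$, I would show it lies on $\phi(F)$ by exhibiting an incident edge. The natural candidate is a boundary edge of $\Gamma_\kk$ meeting this vertex, say the horizontal edge $((n_\ell - 1, n-n_\ell),(n_\ell, n-n_\ell))$ or the vertical edge $((n_\ell, n-n_\ell - 1),(n_\ell, n-n_\ell))$. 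By the last two lines of the definition of $E(\phi(F))$, the horizontal edge lies in $\phi(F)$ provided some point $x_I \in F$ satisfies $x_{n_\ell, n-n_\ell} < x_{n_\ell, n-n_\ell + 1}$, i.e.\ a strict inequality at the edge of the GC-pattern abutting the prescribed boundary value. Here I would use that $x_{n_\ell, n-n_\ell + 1} = x_{n_\ell, n+1 - n_\ell} = \lambda_{n_\ell}$ is a boundary entry, and that the indices $n_\ell$ straddle a strict drop $\lambda_{n_\ell} > \lambda_{n_\ell + 1}$ in the partition of $\lambda$.

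The key point, and the step I expect to be the main obstacle, is to produce a single point $x_I$ in the face $F$ (not merely in $\PP_\lambda$) realizing the required strict inequality at the terminal vertex. It does not suffice to find such a point in $\PP_\lambda$; it must lie in the prescribed face $F$. To handle this I would first reduce to the case $F = \PP_\lambda$ by the following monotonicity: if $F \subset F'$ then every defining strict inequality witnessed in $F$ is witnessed in $F'$, so $E(\phi(F)) \subseteq E(\phi(F'))$ and hence it is enough to treat the smallest faces, namely the vertices, or equivalently to check that \emph{every} face still retains the boundary structure at terminal vertices. Concretely, I would argue that the inequalities $x_{i,j} \ge x_{i+1,j}$ and $x_{i,j+1}\ge x_{i,j}$ forcing the chain from an interior GC-entry out to the boundary value $\lambda_{n_\ell}$ cannot all be equalities on any face: because $\lambda_{n_\ell} > \lambda_{n_{\ell}+1}$, following the GC-pattern along the anti-diagonal from $x_{n_\ell, n-n_\ell}$ up to $\lambda_{n_\ell}$ and comparing with the neighboring anti-diagonal terminating at $\lambda_{n_\ell + 1}$ shows at least one strict inequality must persist, and it can be pinned to the boundary edge at $(n_\ell, n - n_\ell)$. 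Tracing this chain carefully, using $x_{i, n+1-i} = \lambda_i$ and the strict drop in $\lambda$, yields the desired incident edge in $\phi(F)$ and places the terminal vertex in $V(\phi(F))$ for every face $F$. The remaining verifications are the symmetric vertical-edge argument and bookkeeping over $\ell$, which are routine.
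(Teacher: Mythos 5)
Your proposal is correct and follows essentially the same route as the paper: the hardwired boundary edges give the origin and the extreme terminal vertices, and for an interior terminal vertex $(n_\ell,n-n_\ell)$ the two candidate incident edges compare $x_{n_\ell,n-n_\ell}$ with the fixed boundary entries $x_{n_\ell,n+1-n_\ell}=\lambda_{n_\ell}$ and $x_{n_\ell+1,n-n_\ell}=\lambda_{n_\ell+1}$, so the strict drop $\lambda_{n_\ell}>\lambda_{n_\ell+1}$ forces at least one of those edges into $E(\phi(F))$ for any nonempty face $F$. The one caveat is that the intermediate reduction to $F=\PP_\lambda$ via the monotonicity $E(\phi(F))\subseteq E(\phi(F'))$ points the wrong way for your purpose and is unnecessary, as is the anti-diagonal chain-tracing: since both relevant neighbours of $x_{n_\ell,n-n_\ell}$ are already the prescribed values $\lambda_{n_\ell}$ and $\lambda_{n_\ell+1}$, any single point of $F$ witnesses a strict inequality at one of the two edges, which is exactly the paper's (contrapositive) argument.
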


\begin{proof}
	It is clear that $\phi(F)$ contains the origin and two terminal vertices $(0,n)$ and $(n,0)$ by definition of $E(\phi(F))$ and $V(\phi(F))$. 
	Now, let us suppose that a terminal vertex $(n_\ell, n-n_\ell)$ of $\Gamma_\kk$ is not contained in $\phi(F)$ for some $1 \leq \ell \leq r$.
	Then we can see that two edges $((n_\ell - 1, n- n_\ell), (n_\ell, n- n_\ell))$ and $((n_\ell, n- n_\ell-1), (n_\ell, n- n_\ell))$ are not in $E(\phi(F))$
	which implies that any point $x_I \in F$ satisfies $\lambda_{N_\ell} = x_{n_\ell, n-n_\ell} = \lambda_{n_{\ell}+1}$, and this contradicts 
	$\lambda_{n_\ell} > \lambda_{n_\ell+1}$.
	Therefore, $\phi(F)$ contains every terminal vertices of $\Gamma_\kk$.
\end{proof}

\begin{lemma}\label{lemma_extend_positive_path}
	$\phi(F)$ does not contain a non-extremal\footnote{See Definition \ref{definition_ladder_diagram} (4).} vertex $(i,j)$ which is one of six types in Figure \ref{figure_jangsoo_1}. Equivalently, every edge $e \in E(\phi(F))$ can be extended to a positive path lying on $\phi(F)$.
\end{lemma}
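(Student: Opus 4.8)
The plan is to reduce the global, existential condition in the definition of $\phi(F)$ to a pointwise condition at a single interior point, and then to settle everything by an elementary inspection of the four coordinates surrounding each vertex. First I would fix a point $p=(x_{i,j}^0)\in\mathrm{relint}(F)$. For any inequality $x_{a,b}\le x_{a',b'}$ that is valid on $\PP_\lambda$, the subset of $F$ on which it becomes an equality is a face of $F$; this face is proper precisely when the inequality is strict somewhere on $F$, and a proper face is disjoint from $\mathrm{relint}(F)$. Hence the inequality is strict at some point of $F$ if and only if it is strict at $p$. Reading off the definition of $E(\phi(F))$, this shows that each non-axis edge belongs to $\phi(F)$ exactly when the corresponding defining inequality of $\PP_\lambda$ is strict at the single point $p$ (the two families of axis edges being present by definition).

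Next I would localize at a non-extremal vertex $(i,j)$ and name the four coordinates around it: $a=x_{i,j}^0$, $b=x_{i+1,j}^0$, $c=x_{i,j+1}^0$, $d=x_{i+1,j+1}^0$. Every vertex of $\Gamma_\kk$ with $i+j=n$ is terminal, so a non-extremal vertex with $i,j\ge1$ satisfies $i+j\le n-1$; then $(i,j),(i+1,j),(i,j+1)\in I$, all four coordinates are defined, and the relevant inequalities of $\PP_\lambda$ give $c\ge a\ge b$ together with $c\ge d\ge b$. With the reduction of the previous step, the four edges at $(i,j)$ are present exactly when, respectively, $a<c$ (left), $b<a$ (down), $b<d$ (right) and $d<c$ (up); here left and down are the incoming edges and right and up the outgoing edges.

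The crux is then a one-line equality analysis. The two incoming edges are both absent iff $a=c$ and $a=b$, which forces $b=a=c$ and hence $d=b$ through $b\le d\le c$, i.e.\ $a=b=c=d$; symmetrically the two outgoing edges are both absent iff $b=d=c$, which forces $a=b$ and again $a=b=c=d$. Therefore, at each non-extremal vertex an incoming edge is present if and only if an outgoing edge is present, and this excludes exactly the six forbidden configurations of Figure \ref{figure_jangsoo_1} (three with an incoming but no outgoing edge, three with an outgoing but no incoming edge). The asserted equivalence with extendability is then a short monotone-path argument: given any edge, the balance between incoming and outgoing edges at every non-extremal vertex lets one continue forward along outgoing edges until reaching a terminal vertex (the sum $i+j$ strictly increases and is bounded by $n$) and backward along incoming edges until reaching the origin, yielding a positive path through the edge; conversely, any vertex of a positive path other than the origin or a terminal vertex manifestly has both an incoming and an outgoing edge.

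I expect the only real work to be the bookkeeping at the boundary of the diagram. Non-extremal vertices lying on the two coordinate axes carry the axis edges $((0,i),(0,i+1))$ or $((i,0),(i+1,0))$ by the very definition of $E(\phi(F))$, so they automatically possess an incoming and an outgoing edge and require no further argument; for interior vertices I must verify that $i+j\le n-1$ really places the indices $(i+1,j)$ and $(i,j+1)$ inside $I$, so that the four inequalities used above are genuine defining inequalities of $\PP_\lambda$ (the corner coordinate $x_{i+1,j+1}$ may be a fixed boundary value, but it is still defined and the chain $c\ge d\ge b$ still holds). This index check is the main, though entirely routine, obstacle; once it is in place the local computation is immediate.
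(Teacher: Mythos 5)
Your proof is correct and follows the route the paper intends when it says the lemma ``is straightforward by (1.1)'': a local inspection of the four GC-pattern inequalities $x_{i,j+1}\ge x_{i,j}\ge x_{i+1,j}$ and $x_{i,j+1}\ge x_{i+1,j+1}\ge x_{i+1,j}$ surrounding a non-extremal vertex, showing that all incoming edges vanish iff all outgoing edges vanish iff all four coordinates coincide. Your preliminary reduction to a single relative-interior point $p$ is a worthwhile addition the paper leaves implicit, since the definition of $E(\phi(F))$ quantifies over a witness point separately for each edge and the local argument needs one common witness.
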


\begin{proof}
	It is straightforward by (\ref{equation_GC-pattern}). 
\end{proof}

\begin{figure}
	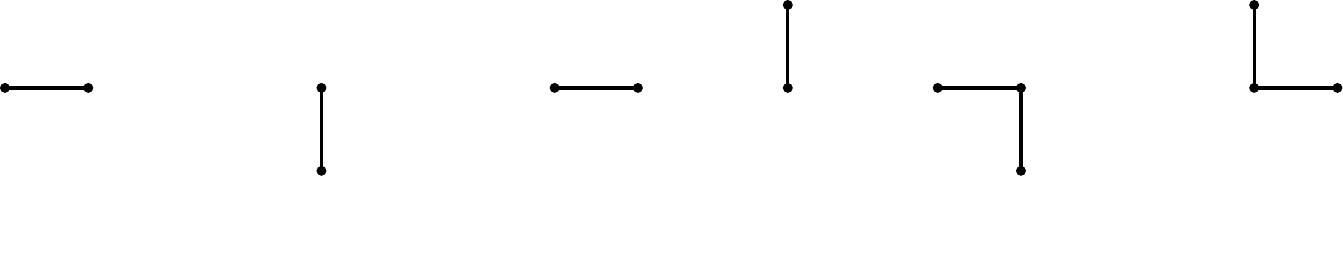
	\caption{\label{figure_jangsoo_1} The impossible sets of edges in a face of $\Gamma_\kk$ incident to 
	non-extremal vertex $(i,j) \in V(\Gamma_\kk)$.} 
\end{figure}

%

By Lemma \ref{lemma_subgraph_of_kk}, Lemma \ref{lemma_contain_top_origin}, and Lemma \ref{lemma_extend_positive_path}, we have the following
corollary.

\begin{corollary}\label{corollary_face}
For any $F\in \mathcal{F}(\PP_\lambda)$, we have
  $\phi(F)\in \mathcal{F}(\Gamma_\kk)$.
\end{corollary}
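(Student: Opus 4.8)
The plan is to verify directly that $\phi(F)$ satisfies the two defining conditions of a face given in Definition~\ref{definition_face}(1): that $V(\phi(F))$ contains every terminal vertex of $\Gamma_\kk$, and that $\phi(F)$ can be presented as a union of positive paths. The three preceding lemmas are tailored to supply exactly these two ingredients, so the argument is essentially a matter of assembling them in the right order.

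First I would record that $\phi(F)$ is genuinely a subgraph of $\Gamma_\kk$ by Lemma~\ref{lemma_subgraph_of_kk}; this is what makes it meaningful to ask whether $\phi(F)$ is a face of $\Gamma_\kk$ in the first place. The first defining condition is then immediate from Lemma~\ref{lemma_contain_top_origin}, which asserts that $V(\phi(F))$ contains the origin and every terminal vertex of $\Gamma_\kk$.

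For the second condition I would invoke the reformulated conclusion of Lemma~\ref{lemma_extend_positive_path}: every edge $e \in E(\phi(F))$ extends to a positive path $P_e$ lying entirely inside $\phi(F)$. The key step is then the identity
\[
\phi(F) = \bigcup_{e \in E(\phi(F))} P_e.
\]
Indeed, each $P_e$ is a subgraph of $\phi(F)$, so the union on the right is contained in $\phi(F)$; conversely every edge $e$ lies on its own $P_e$, and since $V(\phi(F))$ consists precisely of the endpoints of edges of $\phi(F)$, no vertex or edge of $\phi(F)$ is omitted. Hence $\phi(F)$ is exactly a union of positive paths, and both conditions of Definition~\ref{definition_face}(1) hold, giving $\phi(F)\in\mathcal{F}(\Gamma_\kk)$.

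The only point requiring care is this last equality rather than any individual inclusion. Lemma~\ref{lemma_extend_positive_path} guarantees only that each edge \emph{can} be extended to a positive path, so one must confirm that the resulting family of paths reconstructs all of $\phi(F)$ while introducing no extraneous edges. Both directions are secured by the ``lying on $\phi(F)$'' clause of the lemma together with the defining description of $V(\phi(F))$ as the set of endpoints of $E(\phi(F))$, so no separate verification of minimality or completeness of the covering is needed.
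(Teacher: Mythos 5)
Your proposal is correct and follows exactly the paper's route: the paper derives the corollary by combining Lemma~\ref{lemma_subgraph_of_kk}, Lemma~\ref{lemma_contain_top_origin}, and Lemma~\ref{lemma_extend_positive_path}, which is precisely your assembly. Your explicit verification that $\phi(F)=\bigcup_{e\in E(\phi(F))}P_e$ just spells out a step the paper leaves implicit.
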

%
%

\begin{lemma}\label{lemma_bijection}
The map $\phi:\mathcal{F}(\PP_\lambda)\to\mathcal{F}(\Gamma_\kk)$
is a bijection. 
\end{lemma}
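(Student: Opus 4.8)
The plan is to show that $\phi$ is injective and surjective separately; combined with Corollary~\ref{corollary_face}, which already guarantees $\phi$ maps $\mathcal{F}(\PP_\lambda)$ into $\mathcal{F}(\Gamma_\kk)$, this yields the bijection.

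For injectivity I would use the standard fact that a face of a polytope presented by inequalities is determined by the set of defining inequalities that are active (tight) on it. The interior edges of $\Gamma_\kk$ are in bijection with the defining inequalities of $\PP_\lambda$: the horizontal edge $((i-1,j),(i,j))$ corresponds to $x_{i,j}\le x_{i,j+1}$ and the vertical edge $((i,j-1),(i,j))$ to $x_{i,j}\ge x_{i+1,j}$, while the axis edges are present for every face. By the very definition of $E(\phi(F))$, an interior edge lies in $\phi(F)$ exactly when the corresponding inequality is strict at some point of $F$, i.e. exactly when that inequality is \emph{not} active on $F$. Hence $\phi(F)$ records precisely the complement of the active set of $F$, so $\phi(F)=\phi(F')$ forces $F$ and $F'$ to have the same active set and therefore $F=F'$.

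For surjectivity I would construct an explicit inverse. Given $\gamma\in\mathcal{F}(\Gamma_\kk)$, let $S_\gamma$ be the set of defining inequalities of $\PP_\lambda$ corresponding, under the bijection above, to the interior edges of $\Gamma_\kk$ \emph{absent} from $\gamma$, and set
\[
\psi(\gamma)=\{x_I\in\PP_\lambda \mid \text{every inequality in } S_\gamma \text{ holds with equality}\}.
\]
As the intersection of $\PP_\lambda$ with finitely many of its supporting hyperplanes, $\psi(\gamma)$ is a face of $\PP_\lambda$, and by construction every absent edge of $\gamma$ is absent from $\phi(\psi(\gamma))$. It then remains to prove the reverse inclusion $\gamma\subseteq\phi(\psi(\gamma))$, namely that each edge present in $\gamma$ is also present in $\phi(\psi(\gamma))$; this is equivalent to exhibiting a single point $x^\circ\in\psi(\gamma)$ that is strict on every present edge simultaneously (a relative-interior point). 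Such a point simultaneously shows $\psi(\gamma)$ is nonempty and gives $\phi(\psi(\gamma))=\gamma$, after which running the injectivity computation backwards identifies $\psi$ with $\phi^{-1}$.

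The heart of the argument, and the main obstacle, is the construction of $x^\circ$. I would read the absent edges of $\gamma$ as a system of forced equalities among the coordinates $x_{i,j}$ (together with the boundary values $x_{i,n+1-i}=\lambda_i$), merging the coordinate positions into blocks, and read the present edges as required strict inequalities between adjacent blocks. Two consistency conditions must be checked: first, that no block is forced to contain two distinct prescribed boundary values $\lambda_i\neq\lambda_{i'}$, which holds because $\gamma$ contains all terminal vertices and the origin by Lemma~\ref{lemma_contain_top_origin}; and second, that the partial order induced on the blocks by the inequalities in (\ref{equation_GC-pattern}) never forces a present (strict) edge to join two coordinates in the same block. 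The second condition is exactly the statement that $\gamma$, being a union of positive paths, avoids the six local configurations of Figure~\ref{figure_jangsoo_1} (the content of Lemma~\ref{lemma_extend_positive_path} read in reverse). Granting these, the quotient order is acyclic and compatible with the fixed boundary values, so a strictly order-preserving real labeling exists and serves as $x^\circ$. I expect the most delicate bookkeeping to be verifying this consistency at each non-extremal vertex, i.e. that the positive-path structure of $\gamma$ precludes any accidental merge collapsing a required strict inequality.
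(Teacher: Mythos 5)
Your overall architecture matches the paper's: the inverse map $\psi$ you define (impose equality for each absent interior edge) is exactly the paper's $\psi$, and the paper likewise reduces everything to exhibiting a single point of $\psi(\gamma)$ that is strict on every present edge. Your injectivity argument via active sets is a clean addition --- the paper is terse here, proving only $\phi\circ\psi=\mathrm{id}$ --- and it is correct, since by Lemma~\ref{lemma_subgraph_of_kk} the inequalities corresponding to edges of $Q^+$ outside $\Gamma_\kk$ are active on all of $\PP_\lambda$, so $\phi(F)$ really does determine the full active set of $F$.

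The gap is in the step you yourself flag as ``the most delicate bookkeeping.'' You reduce the existence of $x^\circ$ to two consistency conditions and then assert that the second one ``is exactly'' the avoidance of the six local configurations of Figure~\ref{figure_jangsoo_1}. It is not: avoiding those configurations is a local condition at each non-extremal vertex, whereas what you need is a global statement --- that no chain of forced equalities together with the order relations $x_{i,j+1}\ge x_{i,j}\ge x_{i+1,j}$ ever collapses a present (strict) edge, and similarly that no such chain identifies two distinct boundary values $\lambda_{n_\ell}>\lambda_{n_\ell+1}$ (your first condition is not immediate from the presence of the terminal vertices either). Establishing this requires propagating information across the whole diagram, and that propagation is the actual content of the lemma. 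The paper does it by writing down an explicit point $\bar{x}_I$ (recursively: copy the value across an absent edge, average when both incident edges are present) and then inducting on a lexicographically maximal vertex where strictness fails, using Lemma~\ref{lemma_extend_positive_path} at that vertex to reach a contradiction. Some such inductive or extremal argument is unavoidable in your block-and-quotient-poset framework as well: you would need to trace any collapse of a strict edge back to a forbidden local configuration somewhere along the offending chain. Until that is written out, the existence of $x^\circ$ --- and with it surjectivity --- is asserted rather than proved.
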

\begin{proof}
We will show this by constructing an inverse of $\phi$. 
Let $\gamma \in \mathcal{F}(\Gamma_\kk)$. Then we define $\psi(\gamma)$ to be the
set of points $x_I\in \PP_\lambda$ such that
\begin{itemize}
\item if $((i-1,j),(i,j))\not\in \gamma$, then $x_{i,j} = x_{i,j+1}$, and
\item if $((i,j-1),(i,j))\not\in \gamma$, then $x_{i,j} = x_{i+1,j}$.
\end{itemize}
It is obvious that $\psi(\gamma )\in \mathcal{F}(\PP_\lambda)$ since $\psi(\gamma)$ is the intersection of $\PP_\lambda$ and some facet hyperplanes determined by 
the above equalities. 

Let $\gamma'=\phi(\psi(\gamma))$. 
In order to show that $\psi$ is the inverse of $\phi$, we need to show
$\gamma'=\gamma$. In fact, we only need to show $E(\gamma')=E(\gamma)$. 
From
the construction of $\psi(\gamma)$, every edge not in $\gamma$ is not in $\gamma'$, which implies $E(\gamma')\subseteq E(\gamma)$. 
Thus it remains to show
that $E(\gamma)\subseteq E(\gamma')$.

Let us consider the point $\bar{x}_I = (\bar{x}_{i,j})_{(i,j)\in I} \in \psi(\gamma)$ defined recursively as follows:
\begin{itemize}
\item Set $\bar{x}_{i,n+1-i} = \lambda_i$ for $i=1,2,\dots,n$. 
\item If $\bar{x}_{i,j+1}, \bar{x}_{i+1,j}$ are defined, then $\bar{x}_{i,j}$ is defined
  by
\[
\bar{x}_{i,j} = \left\{
\begin{array}{ll}
  \bar{x}_{i,j+1} & \mbox{ if } ((i-1,j),(i,j))\not\in E(\gamma),\\
  \bar{x}_{i+1,j} & \mbox{ if } ((i-1,j),(i,j))\in E(\gamma) \mbox{ and } ((i,j-1),(i,j))\not\in E(\gamma),\\
  \frac12(\bar{x}_{i,j+1}+\bar{x}_{i+1,j}) & \mbox{ if } ((i-1,j),(i,j))\in E(\gamma) \mbox{ and } ((i,j-1),(i,j))\in E(\gamma).
\end{array}
\right.
\]
\end{itemize}

\begin{figure}[H]
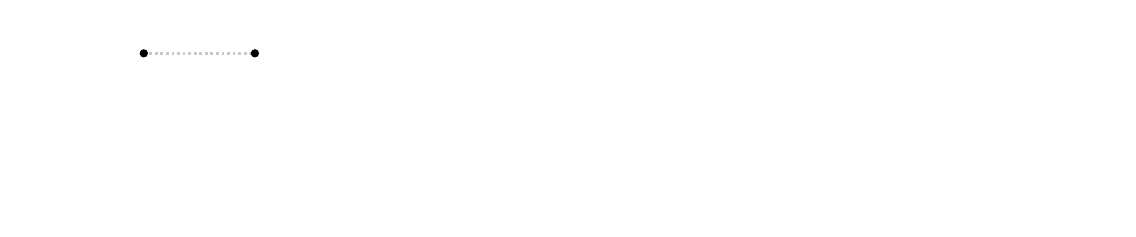
\end{figure}

Then we claim that 
\begin{description}
\item [C1] If $((i-1,i),(i,j))\in E(\gamma)$, then $\bar{x}_{i,j} < \bar{x}_{i,j+1}$, and
\item [C2] if $((i,j-1),(i,j))\in E(\gamma)$, then $\bar{x}_{i,j} > \bar{x}_{i+1,j}$, 
\end{description}
which implies that $E(\gamma)\subseteq E(\gamma')$, which will finish the proof.

For the proof of the claim, suppose that it is false. Then we can find
a lexicographically maximal\footnote{The lexicographic order on $V(\Gamma_\kk)$ is defined by 
$(i,j) \leq (i',j')$ if and only if $i \leq i'$, or $i=i'$ and $j \leq j'$.} vertex $(i,j)$ for which
\textbf{C1} or \textbf{C2} is false. Suppose that \textbf{C1} is
false. Then we have $((i-1,j),(i,j))\in E(\gamma)$ and $\bar{x}_{i,j}=\bar{x}_{i,j+1}$.
\vspace{0.2cm}

\textsl{CASE 1}: $((i,j-1),(i,j))\not\in \gamma$.  By definition of
$\bar{x}_I$ and by our assumption, we have $\bar{x}_{i,j}=\bar{x}_{i+1,j} = \bar{x}_{i,j+1}$. If $i+j=n$, then we have
$\lambda_i = \bar{x}_{i,j+1} = \bar{x}_{i,j} = \bar{x}_{i+1,j} = \lambda_{i+1}$. 
Then $\lambda_i = \lambda_{i+1}$
implies that  $((i-1,j),(i,j)) \not \in E(\Gamma_\kk)$ by definition of $\Gamma_\kk$, and hence $((i-1,j),(i,j)) \not \in E(\gamma)$ which contradicts 
the assumption that $((i-1,j),(i,j)) \in E(\gamma)$. Thus we must have
$i+j<n$. Since $\bar{x}_{i,j+1}\ge \bar{x}_{i+1,j+1} \ge \bar{x}_{i+1,j}$ and
$\bar{x}_{i,j+1} = \bar{x}_{i+1,j}$, we have $\bar{x}_{i,j+1}=\bar{x}_{i+1,j+1} =\bar{x}_{i+1,j}$.
Since we have taken $(i,j)$ to be a maximal vertex  (with respect to the lexicographic order) on which \textbf{C1} or \textbf{C2} fails, we have
$((i,j),(i,j+1))\not\in E(\gamma)$ and $((i,j),(i+1,j))\not\in E(\gamma)$. Then
$((i-1,j),(i,j))$ is the only edge incident to $(i,j)$ in $\gamma$, which
contradicts Lemma~\ref{lemma_extend_positive_path}.
\vspace{0.2cm}

\textsl{CASE 2}: $((i,j-1),(i,j))\in \gamma$.  In this case, we have $\bar{x}_{i,j}=\frac12(\bar{x}_{i,j+1}+\bar{x}_{i+1,j})$. Since
$\bar{x}_{i,j+1}\ge \bar{x}_{i,j} \ge \bar{x}_{i+1,j}$ and
$\bar{x}_{i,j}=\frac12(\bar{x}_{i,j+1}+\bar{x}_{i+1,j})=\bar{x}_{i,j+1}$ by our assumption, we have
$\bar{x}_{i,j+1}= \bar{x}_{i,j}= \bar{x}_{i+1,j}$. Thus we may apply the same argument as in \textsl{CASE 1}, and hence we can deduce a contradiction. 

Thus $(i,j)$ satisfies \textbf{C1}. Similarly we can show that $(i,j)$
also satisfies \textbf{C2}. Thus it completes the proof of our claim \textbf{C1} and \textbf{C2}.
\end{proof}

The following lemma completes the proof of Theorem \ref{theorem_one_to_one_faces}. 

\begin{lemma}\label{lem:3}
  The map $\phi:\mathcal{F}(\PP_\lambda)\to\mathcal{F}(\Gamma_\kk)$ is
  a poset isomorphism. Moreover, we have 
  \[
  	\dim F = \dim \phi(F)
  \]
  for every $F \in \F(\PP_\lambda)$.
\end{lemma}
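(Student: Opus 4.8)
The plan is to prove the two assertions in turn, building on the bijection $\phi$ and its inverse $\psi$ constructed in Lemma~\ref{lemma_bijection}. For the poset statement, since $\phi$ is already known to be a bijection, I would only check that $\phi$ and $\psi$ are each order-preserving. An edge belongs to $E(\phi(F))$ exactly when some point of $F$ witnesses the relevant strict inequality, so $F\subseteq F'$ immediately gives $E(\phi(F))\subseteq E(\phi(F'))$; dually, $\psi(\gamma)$ is cut out of $\PP_\lambda$ by the equalities indexed by the edges \emph{absent} from $\gamma$, so $\gamma\subseteq\gamma'$ imposes at least as many equalities on $\psi(\gamma)$ and forces $\psi(\gamma)\subseteq\psi(\gamma')$. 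Together these give $F\subseteq F'\iff\phi(F)\subseteq\phi(F')$, upgrading the bijection to an isomorphism of posets (equivalently of lattices).

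For the dimension claim I would fix $F$, set $\gamma=\phi(F)$ so that $F=\psi(\gamma)$, and first reduce to a counting problem. The point $\bar{x}_I$ constructed in Lemma~\ref{lemma_bijection} satisfies, by \textbf{C1} and \textbf{C2}, the strict inequality across every edge of $\gamma$ and the defining equality across every edge of $\Gamma_\kk$ missing from $\gamma$; hence $\bar{x}_I$ lies in the relative interior of $F$, and the affine hull of $F$ is precisely the solution space of the linear system given by $x_{i,j}=x_{i,j+1}$ for each absent horizontal edge $((i-1,j),(i,j))$, by $x_{i,j}=x_{i+1,j}$ for each absent vertical edge $((i,j-1),(i,j))$, and by the boundary equations $x_{i,n+1-i}=\lambda_i$. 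Thus $\dim F$ equals the dimension of this solution space, and nonemptiness of $F$ guarantees the system is consistent.

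To evaluate that dimension I would pass to the planar picture. Placing each coordinate $x_{i,j}$, $(i,j)\in I$, in the corresponding unit cell of the region bounded by $\Gamma_\kk$, a missing interior edge becomes exactly the wall separating two adjacent cells, while a missing boundary edge pins the adjacent cell to a value $\lambda_i$. The equalities above then merge two cells precisely when the grid edge between them is absent from $\gamma$, so the merged classes are the connected regions of the complement of $\gamma$, and the solution-space dimension equals the number of such classes meeting no boundary equation. A class avoids every boundary equation exactly when all grid edges bounding its region lie in $\gamma$, that is, exactly when the region is a \emph{bounded} complementary region of the planar graph $\gamma$; this should yield a bijection between the free classes and the bounded faces of $\gamma$.

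Finally, because every positive path emanates from the origin, $\gamma$ is connected, so Euler's formula counts its bounded faces as $|E(\gamma)|-|V(\gamma)|+1=\mathrm{rank}\,H_1(\gamma)=\dim\gamma$, and chaining the equalities gives $\dim F=\dim\phi(F)$. I expect the planar-duality step to be the main obstacle: one must pin down the cell--coordinate dictionary so that it matches the edge conventions defining $\phi$, verify that a class is pinned if and only if its region reaches the outer staircase through an absent edge, and confirm that any bridge edges of $\gamma$ lying inside a region neither split a class into two nor contribute a cycle, so that the free-class count agrees exactly with $\mathrm{rank}\,H_1(\gamma)$.
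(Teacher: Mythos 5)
Your argument for the poset-isomorphism half is the same as the paper's: both directions follow from the definition of $\phi$ (equalities of $F$ become deleted edges) and of its inverse $\psi$ from Lemma~\ref{lemma_bijection}. For the dimension formula, however, you take a genuinely different route. The paper argues softly: since $\mathcal{F}(\PP_\lambda)$ is graded, any $F$ sits in a maximal chain $F_0\subset\cdots\subset F_{\dim\PP_\lambda}$ with $\dim F_i=i$; the image chain has strictly increasing dimensions in $\mathcal{F}(\Gamma_\kk)$ (adding edges to a face forces a new cycle), and since the top dimensions agree, $\dim\phi(F_i)=i$ is forced by counting. This needs almost no computation but gives no intrinsic description of $\dim F$. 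Your approach computes $\dim F$ directly as the dimension of the affine hull cut out by the equalities indexed by absent edges, and identifies it with the number of bounded complementary regions of $\gamma=\phi(F)$, which Euler's formula converts to $\operatorname{rank}H_1(\gamma)$. That is more work but more informative, and your use of the interior point $\bar{x}_I$ from Lemma~\ref{lemma_bijection} to identify the affine hull with the full solution space of the equality system is correct.

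The step you flag as the main obstacle does need to be addressed, but it resolves more cleanly than you fear, and the bridge worry is a red herring. The right statement is: two cells lie in the same merged class if and only if their interiors lie in the same connected component of $\R^2\setminus\gamma$. One direction is clear (an absent edge between adjacent cells puts them in one component). For the other, a path between two cell interiors inside a single component avoids $\gamma$ entirely, so after a small perturbation it crosses only open absent edges and passes only near vertices all of whose incident $\gamma$-edges it avoids; each elementary crossing is an instance of the defining equalities, so the cells are merged. In particular a bridge of $\gamma$ with the same bounded region on both sides does not split that region into two classes, because the two sides are rejoined by a chain of absent-edge crossings around the bridge. What remains is exactly what you listed: the boundary cells $x_{i,n+1-i}$ and the cells outside $\Gamma_\kk$ all lie in the unbounded component (their upper-right corners are not vertices of $\Gamma_\kk$), so the unbounded component contributes the single pinned class, every bounded component contributes exactly one free class, and connectedness of $\gamma$ (it contains the two axis paths, since the terminal vertices $(n,0)$ and $(0,n)$ force them) justifies $|E|-|V|+1=\operatorname{rank}H_1(\gamma)$. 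With those checks written out your proof is complete and independent of the paper's chain argument.
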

\begin{proof}
  Let $F_1$ and $F_2$ be faces of $\PP_\lambda$ such that $F_1\subseteq F_2$. Then $F_1$ is an intersection of 
  $F_2$ and some facet hyperplanes, i.e., 
  $F_1$ is obtained from $F_2$ by replacing
  some inequalities $x_{i,j+1}\ge x_{i,j}$ or $x_{i+1,j}\ge x_{i,j}$
  by equalities $x_{i,j+1}= x_{i,j}$ or $x_{i+1,j}= x_{i,j}$.  By the
  definition of $\phi$, in this case $\phi(F_1)$ is obtained from
  $\phi(F_2)$ by removing corresponding edges. Thus we have
  $\phi(F_1)\subseteq \phi(F_2)$. Conversely, suppose that
  $\phi(F_1)\subseteq \phi(F_2)$. By the construction of the inverse
  map of $\phi$ in the proof of Lemma~\ref{lemma_bijection}, we clearly have
$F_1\subseteq F_2$. Thus $\phi$ is a poset isomorphism.

For the dimension formula, recall that  
\[
\dim \mathcal{P}_\lambda  = \dim \Gamma_\kk = \frac{1}{2} \left( n^2 - \sum_{i=1}^s k_i^2 \right).
\]
Since $\phi$ is a poset isomorphism, $\phi$ maps a maximal chain $F_0 \subset F_1 \subset \cdots \subset F_{\dim P_\lambda}$ in $\F(P_\lambda)$ 
to the maximal chain $\phi(F_0) \subset \phi(F_1) \subset \cdots \subset \phi(F_{\dim P_\lambda})$ in $\F(\Gamma_\kk)$. Then the dimension formula
follows from the simple observation that $\gamma  \subsetneq \gamma'$ implies $\dim \gamma < \dim \gamma'$ for every $\gamma, \gamma' \in \F(\Gamma_\kk)$.

%
\end{proof}

\section{Exponential generating functions and PDE systems}
\label{secProofOfTheoremRefTheoremglobalgeneratingfunction}

In this section we study the exponential generating function of $f$-polynomials of $\Gamma_\kk$'s defined by 
\[
		\Psi_s(\xx;t) = \sum_{\kk \in \Z^s_{\ge 0}} \FF_\kk(t) \frac{\xx^\kk}{\kk !}
\]
where $\FF_\kk(t)$ is the $f$-polynomial of $\Gamma_\kk$.
Also, we give the complete proof of Theorem \ref{theorem_global_generating_function}.
\subsection{Notations}
To begin with, we first introduce some notations as follows. 
\begin{notation} Let $s \geq 1$ be an integer. 
	\begin{itemize}
		\item For a multivariable $\xx = (x_1, \cdots, x_s)$ and $\mathbf{a} = (a_1, \cdots, a_s) \in \Z^s_{\ge 0}$, 
			\[
				\xx^{\mathbf{a}} := x_1^{a_1}\cdots x_s^{a_s}, \quad \mathbf{a}! := a_1!\cdots a_s!.
			\]	
		\item For another multivariable $\yy = (y_1, \cdots, y_{s-1})$, 
			\[
				\xx * \yy := (x_1, y_1, \cdots, x_{s-1}, y_{s-1}, x_s).
			\]
			In particular, we have 
			\[
				(\xx*\yy)^{\mathbf{a}*\mathbf{b}}=\xx^{\mathbf{a}} \yy^{\mathbf{b}},\qquad
				(\mathbf{a}*\mathbf{b})!=\mathbf{a}! \cdot \mathbf{b}!.
			\]
	\end{itemize}
\end{notation}
\begin{notation}
	Let $W_{s-1}$ be the set of all sequences of length $(s-1)$ on the set $\{1,0), (0,1), (1,1)\}$, i.e., each element of $W_{s-1}$ is of the form 
			\[
				\bw=((\alpha_1,\beta_1),\dots,(\alpha_{s-1},\beta_{s-1})), \quad (\alpha_i,\beta_i)\in \{(1,0),(0,1),(1,1)\} ~\text{for}~1\le i\le s-1.
			\]
	In particular, we have $\#(W_{s-1}) = 3^{s-1}$. 
	For $\kk = (k_1, \cdots, k_s) \in \Z^s$ and $\bw = ((\alpha_1, \beta_1), \cdots, (\alpha_{s-1}, \beta_{s-1})) \in W_{s-1}$, we denote by   
	\begin{itemize}
		\item $\alpha_s = \beta_0 = 1$, 
		\item $d_\bw(\kk) = (k_1',\dots,k_s') \in \Z^s$ where 
			\[
				k_i' = k_i-(1-\alpha_i)-(1-\beta_{i-1}), \qquad 1\le i\le s,
			\]
		\item $r_\bw(\kk) = (k_1'',\dots,k_s'') \in \Z^s$ where 
			\[
				k_i'' = k_i+1-\alpha_i-\beta_{i-1}, \qquad 1\le i\le s,
			\]
		\item $\tilde \bw = (\alpha_1\beta_1,\dots, \alpha_{s-1}\beta_{s-1})$, and 
		\item $|\bw| = \sum_{i=1}^{s-1}\alpha_i\beta_i$.
	\end{itemize}
\end{notation}

To help the readers understand the meaning of $W_{s-1}$, $d_\bw(\kk)$, and $r_\bw(\kk)$, we briefly give an additional explanation as follows. 
Let $\kk = (k_1, \cdots, k_s) \in (\Z_{\geq 1})^s$ so that the set of terminal vertices of $\Gamma_\kk$ is given by 
\[
	T_\kk = \{v_i = (n_i, n - n_i) \in V(\Gamma_\kk) ~|~ n_0 = 0, n_i = \sum_{j=1}^i k_j, i=1,\cdots,s\}.
\]
For a face $\gamma \in \Gamma_\kk$, the shape of $\gamma$ near a vertex $v_i \in T_\kk$ for $i \neq 0, s$ is one of three types : \\
\[	
\begingroup%
  \makeatletter%
  \providecommand\color[2][]{%
    \errmessage{(Inkscape) Color is used for the text in Inkscape, but the package 'color.sty' is not loaded}%
    \renewcommand\color[2][]{}%
  }%
  \providecommand\transparent[1]{%
    \errmessage{(Inkscape) Transparency is used (non-zero) for the text in Inkscape, but the package 'transparent.sty' is not loaded}%
    \renewcommand\transparent[1]{}%
  }%
  \providecommand\rotatebox[2]{#2}%
  \ifx\svgwidth\undefined%
    \setlength{\unitlength}{96.3796875bp}%
    \ifx\svgscale\undefined%
      \relax%
    \else%
      \setlength{\unitlength}{\unitlength * \real{\svgscale}}%
    \fi%
  \else%
    \setlength{\unitlength}{\svgwidth}%
  \fi%
  \global\let\svgwidth\undefined%
  \global\let\svgscale\undefined%
  \makeatother%
  \begin{picture}(1,0.2519011)%
    \put(0,0){\includegraphics[width=\unitlength,page=1]{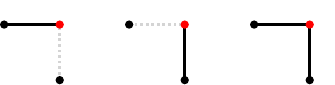}}%
    \put(0.92550622,0.21996361){\color[rgb]{0,0,0}\makebox(0,0)[lb]{\smash{$v$}}}%
    \put(0.17846084,0.21996336){\color[rgb]{0,0,0}\makebox(0,0)[lb]{\smash{$v$}}}%
    \put(0.55198353,0.21996336){\color[rgb]{0,0,0}\makebox(0,0)[lb]{\smash{$v$}}}%
  \end{picture}%
\endgroup%

\]	
Near $v_0$ and $v_s$, the shape of $\gamma$ is equal to 
\[
\begingroup%
  \makeatletter%
  \providecommand\color[2][]{%
    \errmessage{(Inkscape) Color is used for the text in Inkscape, but the package 'color.sty' is not loaded}%
    \renewcommand\color[2][]{}%
  }%
  \providecommand\transparent[1]{%
    \errmessage{(Inkscape) Transparency is used (non-zero) for the text in Inkscape, but the package 'transparent.sty' is not loaded}%
    \renewcommand\transparent[1]{}%
  }%
  \providecommand\rotatebox[2]{#2}%
  \ifx\svgwidth\undefined%
    \setlength{\unitlength}{73.0125bp}%
    \ifx\svgscale\undefined%
      \relax%
    \else%
      \setlength{\unitlength}{\unitlength * \real{\svgscale}}%
    \fi%
  \else%
    \setlength{\unitlength}{\svgwidth}%
  \fi%
  \global\let\svgwidth\undefined%
  \global\let\svgscale\undefined%
  \makeatother%
  \begin{picture}(1,0.33252012)%
    \put(0,0){\includegraphics[width=\unitlength,page=1]{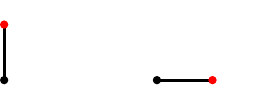}}%
    \put(0.83821263,0.07122135){\color[rgb]{0,0,0}\makebox(0,0)[lb]{\smash{$v_s$}}}%
    \put(0.01643554,0.29036124){\color[rgb]{0,0,0}\makebox(0,0)[lb]{\smash{$v_0$}}}%
  \end{picture}%
\endgroup%

\]
Thus the shape of $\gamma$ near $T_\kk$ is determined by the following map 
 \[
 		\mathcal{A}_\gamma : T_\kk \rightarrow \{ \to, \uparrow, \ur \}, \quad \mathcal{A}_\gamma(v_0) = ~\uparrow,  \quad \mathcal{A}_\gamma(v_s) = ~\to, 
 \]
 called an {\em assignment on $T_\kk$}, which is defined in the obvious way, see Figure \ref{figure_9_neighborhoods}. 
Then we may identify $W_{s-1}$ with the set of all assignments on $T_\kk$, 
where $\to$ corresponds to $(1,0)$, $\uparrow$ corresponds to $(0,1)$, and $\ur$ corresponds to $(1,1)$. 
Then $\widetilde{\bw}$ is the vector which assigns the position of $\ur$'s, and $|\bw|$ is the number of $\ur$'s in $\bw$ for each $\bw \in W_{s-1}$.  

Now, let us think of the geometric meaning of $r_\bw(\kk)$ and $d_\bw(\kk)$. 
For each $\bw = ((\alpha_1, \beta_1), \cdots, (\alpha_{s-1}, \beta_{s-1})) \in W_{s-1}$, let us consider a subgraph $g_\bw$ of $\Gamma_\kk$
such that the edge set of $g_\bw$ is defined to be 
\[
	E(g_\bw) := \{ ((v_i - (1,0), v_i)) ~|~ \alpha_i = 1\} \cup \{ (v_i - (0,1), v_i) ~|~ \beta_i = 1\}, 
\]
and the vertex set $V(g_\bw)$ is defined to be the set of endpoints of edges in $E(g_\bw)$. 

It is easy to check that $V(g_\bw) = V_{n-1}(g_\bw) \cup T_\kk$, where $V_{n-1}(g_\bw)$ is the set of vertices of $g_\bw$ 
lying on the line whose equation is given by $x+y = n-1$ where $n = \sum_{i=1}^s k_i$. See Figure \ref{figure_9_neighborhoods} for example : for each 
assignment $\bw$ on $T_\kk$, the blue dots are the vertices in $V_{n-1}(g_\bw)$, the red dots are the vertices in $T_\kk$, 
and the black line segments are edges of $g_\bw$.  
Thus $V(g_\bw)$ defines a unique 
ladder diagram, denoted by $\Gamma_\kk(\bw)$, whose set of terminal vertices is equal to $V_{n-1}(g_\bw)$. Then the following lemma interprets the geometric meaning of $r_\bw(\kk)$. 

\begin{lemma}\label{lemma_next_terminal_vertices}
	$\Gamma_{r_\bw(\kk) * \widetilde{\bw}} = \Gamma_\kk(\bw)$. 
\end{lemma}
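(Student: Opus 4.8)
The plan is to exploit the fact that a ladder diagram is completely determined by the set of $x$-coordinates of its terminal vertices. Indeed, by Definition~\ref{definition_ladder_diagram} the vertex set of a ladder diagram consists of all lattice points dominated coordinatewise by some terminal vertex; since all terminal vertices lie on one anti-diagonal, they are precisely the coordinatewise-maximal elements of that vertex set, so the terminal vertices are recovered from the diagram. Both sides of the claimed identity are ladder diagrams whose terminal vertices lie on the line $x+y=n-1$: for the right-hand side this is built into the definition of $\Gamma_\kk(\bw)$ through $V_{n-1}(g_\bw)$, and for the left-hand side I would first check that the entries of $r_\bw(\kk)*\tilde\bw$ sum to $n-1$. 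It therefore suffices to prove that the partial sums of $r_\bw(\kk)*\tilde\bw$ produce precisely the $x$-coordinates of the points of $V_{n-1}(g_\bw)$.

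First I would read off $V_{n-1}(g_\bw)$ directly from the definition of $g_\bw$: for each $i$ with $0\le i\le s$, the horizontal edge puts $v_i-(1,0)=(n_i-1,\,n-n_i)$ into $V_{n-1}(g_\bw)$ exactly when $\alpha_i=1$, and the vertical edge puts $v_i-(0,1)=(n_i,\,n-n_i-1)$ into it exactly when $\beta_i=1$, where at the extremal vertices one uses $\beta_0=\alpha_s=1$ and $\alpha_0=\beta_s=0$. Thus $v_i$ contributes the $x$-coordinate $n_i-1$ when $\alpha_i=1$ and $n_i$ when $\beta_i=1$, contributing both in the $\ur$-case $(\alpha_i,\beta_i)=(1,1)$. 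The one coincidence to record is that $v_{i-1}-(0,1)=v_i-(1,0)$ precisely when $k_i=1$ and $\beta_{i-1}=\alpha_i=1$, so that these two prescriptions name a single point of the line; this is exactly the configuration that forces a vanishing entry in $r_\bw(\kk)$, and it will be absorbed by the convention $\Gamma_{\kk'}=\Gamma_{\underline{\kk'}}$ of Remark~\ref{remark_def_non_negative}.

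Next I would compute the partial sums of $\kk':=r_\bw(\kk)*\tilde\bw=(k_1'',\alpha_1\beta_1,k_2'',\dots,\alpha_{s-1}\beta_{s-1},k_s'')$. Writing $P_i$ for the partial sum through the entry $k_i''$ and $Q_i=P_i+\alpha_i\beta_i$ for the partial sum through $\alpha_i\beta_i$, a telescoping computation based on $k_i''=k_i+1-\alpha_i-\beta_{i-1}$ and the elementary identity $\alpha_\ell+\beta_\ell-\alpha_\ell\beta_\ell=1$ (valid because $(\alpha_\ell,\beta_\ell)\in\{(1,0),(0,1),(1,1)\}$) collapses to
\[
P_i=n_i-\alpha_i,\qquad Q_i=n_i-\alpha_i(1-\beta_i)\quad(1\le i\le s-1),\qquad P_s=n-1.
\]
Comparing with the previous paragraph: in the $\to$-case $P_i=Q_i=n_i-1$, in the $\uparrow$-case $P_i=Q_i=n_i$, and in the $\ur$-case $P_i=n_i-1<Q_i=n_i$. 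In the first two cases the repeated value corresponds to the entry $\alpha_i\beta_i=0$ that is deleted by Remark~\ref{remark_def_non_negative}, while in the $\ur$-case both values genuinely occur; similarly a vanishing $k_i''$ makes $P_i=Q_{i-1}$, matching the coincident point found above. Hence the distinct partial sums of $\kk'$ are exactly $0$ together with the $x$-coordinates of $V_{n-1}(g_\bw)$, with $P_s=n-1$ confirming the total; this yields $T_{\underline{\kk'}}=V_{n-1}(g_\bw)$ and therefore $\Gamma_{r_\bw(\kk)*\tilde\bw}=\Gamma_\kk(\bw)$.

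The computation of $P_i$ and $Q_i$ is routine; the step requiring genuine care is the bookkeeping of vanishing entries. I expect the main obstacle to be showing cleanly that each zero entry of $r_\bw(\kk)*\tilde\bw$ matches a genuine coincidence of lattice points on the line $x+y=n-1$ — a zero in an even slot to a vertex contributing a single $x$-coordinate, and a zero $k_i''$ to the equality $v_{i-1}-(0,1)=v_i-(1,0)$ — so that after applying the positivity convention of Remark~\ref{remark_def_non_negative} the two descriptions of the terminal-vertex set agree as sets and not merely through their lists of partial sums. Once this correspondence is made precise, the equality of terminal-vertex sets, and hence of the two ladder diagrams, follows.
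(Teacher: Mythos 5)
Your proof is correct and is essentially the paper's argument: both reduce the statement to identifying the terminal vertices of $\Gamma_{r_\bw(\kk)*\tilde\bw}$ with $V_{n-1}(g_\bw)$, the paper by checking that the consecutive coordinate gaps are the entries $k_i''=k_i+1-\alpha_i-\beta_{i-1}$ and $\alpha_i\beta_i$, you by the equivalent check that the partial sums $P_i=n_i-\alpha_i$, $Q_i=n_i-\alpha_i(1-\beta_i)$ reproduce the $x$-coordinates of $V_{n-1}(g_\bw)$. Your explicit bookkeeping of the degenerate cases (zero entries absorbed by Remark~\ref{remark_def_non_negative}, and the coincidence $v_{i-1}-(0,1)=v_i-(1,0)$ when $k_i''=0$) is a welcome elaboration of what the paper dismisses as ``straightforward.''
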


\begin{proof}
	Note that each sequence $\kk = (k_1, \cdots, k_s) \in \Z^s_{\geq 0}$ defines the unique ladder diagram $\Gamma_\kk$.
	In particular, each $k_i$ is the 
	same as the difference of $y$-coordinates between two consecutive vertices $v_{i-1}$ and $v_i$ in $T_\kk$. 
	
	As seen in Figure \ref{figure_oprat}, each component $k_i''$ of $r_\bw(\kk)$ measures the difference of the $y$-coordinates of two 
	consecutive vertices in $V_{n-1}(g_\bw)$ which corresponds to $v_{i-1}$ and $v_i$ in $T_\kk$. 
	In fact, we can easily see that $k_i''$ is determined by the values $\alpha_i$ and $\beta_{i-1}$ and is equal to 
	\[
	k_i'' = k_i + 1 - \alpha_i - \beta_{i-1}
	\] in any case, see Figure \ref{figure_oprat}.(a). 
	 Also, observe that each $v_i$ with $\alpha_i \beta_i = 1$ produces two vertices in $V_{n-1}(g_\bw)$ 
	 such that the difference of their $y$-coordinates is $1$, see 
	 Figure \ref{figure_oprat}.(b). Thus the proof is straightforward. 
\end{proof}

\begin{figure}
	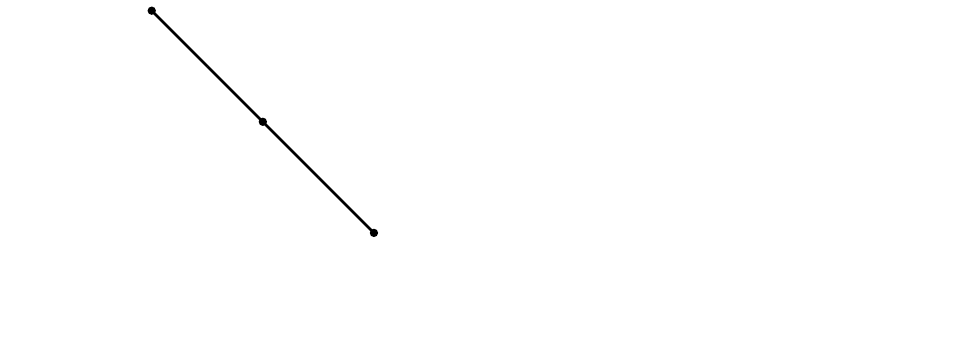
	\caption{\label{figure_oprat} Definition of $r_\bw(\kk)$ and $\widetilde{\bw}$.}
\end{figure}

Finally, the meaning of $d_\bw(\kk)$ is given as follows. 
 
\begin{lemma}\label{lem4}
For any $\kk \in \Z^s$, we have 
\[
	r_\bw(d_\bw(\kk) + \mathbf{1}) = \kk
\]
where $\mathbf{1} = (1,\cdots, 1) \in \Z^s_{\geq 1}$. 
In particular, if $d_\bw(\kk) \in \Z^s_{\ge 0}$, then so is $\kk$. 
\end{lemma}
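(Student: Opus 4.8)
The plan is to prove the identity by a single direct computation, substituting the definitions of $d_\bw$ and $r_\bw$ and checking that all correction terms cancel. Since both operators act coordinatewise---the $i$-th output coordinate depends only on $k_i$, $\alpha_i$, and $\beta_{i-1}$---it suffices to verify the equality separately in each coordinate $i$ with $1 \le i \le s$, keeping the boundary conventions $\alpha_s = \beta_0 = 1$ in mind so that the endpoints $i=1$ and $i=s$ are handled uniformly.

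First I would set $\mathbf{m} = d_\bw(\kk) + \mathbf{1}$, so that its $i$-th coordinate is
\[
m_i = k_i - (1-\alpha_i) - (1-\beta_{i-1}) + 1.
\]
Applying $r_\bw$ and reading off the $i$-th coordinate gives
\[
(r_\bw(\mathbf{m}))_i = m_i + 1 - \alpha_i - \beta_{i-1}
= \bigl(k_i - (1-\alpha_i) - (1-\beta_{i-1}) + 1\bigr) + 1 - \alpha_i - \beta_{i-1}.
\]
Expanding the right-hand side, the $+\alpha_i$ and $-\alpha_i$ cancel, the $+\beta_{i-1}$ and $-\beta_{i-1}$ cancel, and the constants $-1-1+1+1$ sum to zero, leaving exactly $k_i$. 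Hence $r_\bw(d_\bw(\kk)+\mathbf{1}) = \kk$, as claimed.

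For the second assertion, I would simply rearrange the defining relation of $d_\bw$ into
\[
k_i = k_i' + (1-\alpha_i) + (1-\beta_{i-1}),
\]
where $k_i'$ denotes the $i$-th coordinate of $d_\bw(\kk)$. Since each $\alpha_i,\beta_{i-1}\in\{0,1\}$, both $(1-\alpha_i)$ and $(1-\beta_{i-1})$ are nonnegative, so $k_i \ge k_i'$ for every $i$. Thus if every $k_i' \ge 0$, i.e.\ $d_\bw(\kk) \in \Z^s_{\ge 0}$, then every $k_i \ge 0$, i.e.\ $\kk \in \Z^s_{\ge 0}$.

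There is no genuine obstacle here; the statement is pure bookkeeping, and the ``hard part'' is only the discipline of tracking indices correctly. The single point that deserves a moment's care is the boundary indexing: one must apply the conventions $\alpha_s = \beta_0 = 1$ consistently so that the coordinatewise cancellation above remains valid at $i=1$ and $i=s$. Because these conventions enter the computation symmetrically with the interior terms, they cause no difficulty, and the verification goes through verbatim at the endpoints.
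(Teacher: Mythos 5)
Your computation is correct and is exactly the direct coordinatewise verification that the paper has in mind when it declares the lemma ``straightforward from the definitions'' (the paper omits the details entirely). Nothing further is needed.
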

\begin{proof}
The proof is straightforward from the definitions of $r_\bw(\kk)$ and $d_\bw(\kk)$. 
\end{proof}
 
 \begin{figure}
		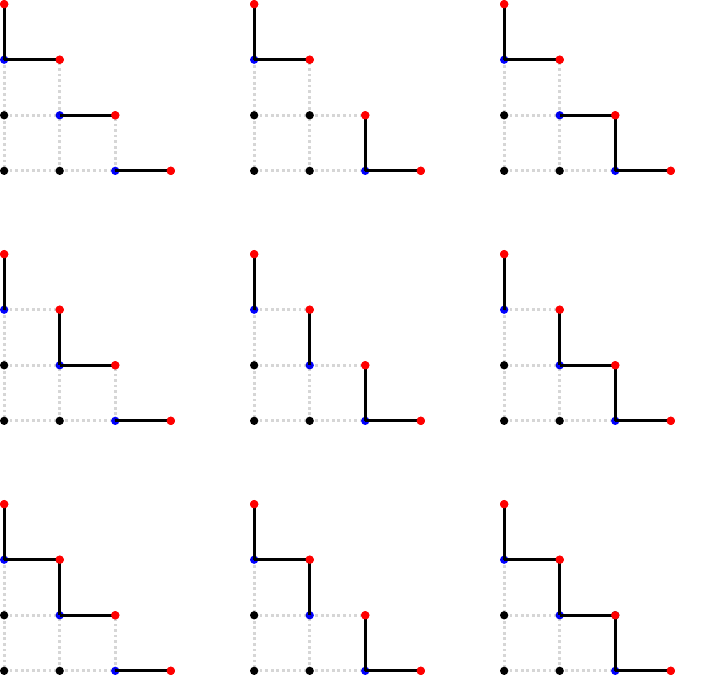
		\caption{\label{figure_9_neighborhoods} $3^{3-1}$ possible types of edges near $T_{(1,1,1)}$.}
\end{figure}

\subsection{Partial differential operators}
For each $\bw=((\alpha_1,\beta_1),\dots,(\alpha_{s-1},\beta_{s-1}))\in W_{s-1}$, let us denote by 
 \[
			D_\bw = \prod_{i=1}^{s-1} \left(\frac{\partial}{\partial x_i} \right)^{1-\alpha_i} \left(\frac{\partial}{\partial x_{i+1}} \right)^{1-\beta_i}
			\left(t\cdot \frac{\partial}{\partial y_i} \right)^{\alpha_i\beta_i}
\]
the differential operator defined on the ring of formal power series $\Q[[x_1, y_1, \cdots, x_{s-1}, y_{s-1}, x_s ; t]]$. 


\begin{lemma}\label{lem2}
The following identity holds : 
\[
\prod_{i=1}^{s-1}\left (\frac{\partial}{\partial x_i} + \frac{\partial}{\partial x_{i+1}}
		+ t \cdot \frac{\partial}{\partial y_i}\right )
= \sum_{\bw\in W_{s-1}} D_\bw.
\]  
\end{lemma}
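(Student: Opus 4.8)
The plan is to establish the identity by pure distributivity, treating both sides as compositions of the pairwise-commuting operators involved. First I would record the one fact that legitimizes the expansion: on the formal power series ring $\Q[[x_1,y_1,\dots,x_{s-1},y_{s-1},x_s;t]]$ the operators $\partial/\partial x_j$ and $\partial/\partial y_j$ all commute with one another, and multiplication by $t$ commutes with each of them since no $\partial/\partial t$ occurs. Hence the product $\prod_{i=1}^{s-1}\bigl(\partial/\partial x_i + \partial/\partial x_{i+1} + t\,\partial/\partial y_i\bigr)$ may be expanded exactly as one expands a product of sums of commuting quantities, and the order in which the chosen factors are multiplied is immaterial.

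Next I would expand that product by selecting, independently for each $i\in\{1,\dots,s-1\}$, one of the three summands $\partial/\partial x_i$, $\partial/\partial x_{i+1}$, or $t\,\partial/\partial y_i$ from the $i$-th factor. Each such selection is encoded by a sequence $\bw\in W_{s-1}$ via the dictionary
\[
\frac{\partial}{\partial x_i}\ \longleftrightarrow\ (\alpha_i,\beta_i)=(0,1),\qquad
\frac{\partial}{\partial x_{i+1}}\ \longleftrightarrow\ (1,0),\qquad
t\cdot\frac{\partial}{\partial y_i}\ \longleftrightarrow\ (1,1),
\]
which is a bijection between the $3^{s-1}$ possible selections and $W_{s-1}=\{(1,0),(0,1),(1,1)\}^{s-1}$. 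The expanded product then reads as a sum over $\bw\in W_{s-1}$ of the product of the chosen factors, one factor per index $i$.

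It remains to check that, for a fixed $\bw$, the product of the selected factors is precisely $D_\bw$, which is a short case-by-case verification against the exponents $1-\alpha_i$, $1-\beta_i$, $\alpha_i\beta_i$ in the definition of $D_\bw$: for $(\alpha_i,\beta_i)=(0,1)$ the exponent triple is $(1,0,0)$, giving the $i$-th factor $\partial/\partial x_i$; for $(1,0)$ it is $(0,1,0)$, giving $\partial/\partial x_{i+1}$; and for $(1,1)$ it is $(0,0,1)$, giving $t\,\partial/\partial y_i$. These agree term by term with the dictionary above, so the product of the selected summands equals $D_\bw$, and summing over all $\bw\in W_{s-1}$ yields $\sum_{\bw\in W_{s-1}}D_\bw$. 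The argument is entirely formal; the only point requiring any care is the bookkeeping in this last step, namely confirming that the excluded fourth choice $(\alpha_i,\beta_i)=(0,0)$—which would produce the exponent triple $(1,1,0)$ and hence a cross term $\partial^2/\partial x_i\partial x_{i+1}$ absent from the three summands—never arises, precisely because $(0,0)\notin W_{s-1}$ by definition, so that no terms are missing and none are double-counted.
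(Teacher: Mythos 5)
Your proof is correct and follows essentially the same route as the paper: expand the product by choosing one summand per factor, identify each choice with an element of $W_{s-1}$ via the same dictionary $(0,1)\leftrightarrow\partial/\partial x_i$, $(1,0)\leftrightarrow\partial/\partial x_{i+1}$, $(1,1)\leftrightarrow t\,\partial/\partial y_i$, and check that the resulting product is $D_\bw$. Your added remarks on commutativity and on the excluded choice $(0,0)$ are sound bookkeeping that the paper leaves implicit.
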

\begin{proof}
Let
$\bw=((\alpha_1,\beta_1),\dots,(\alpha_{s-1},\beta_{s-1}))\in
W_{s-1}$. Then in each $i$th factor of the left hand side, we take 
$\frac{\partial}{\partial x_i}$ if $(\alpha_i,\beta_i)=(0,1)$, 
$\frac{\partial}{\partial x_{i+1}}$ if $(\alpha_i,\beta_i)=(1,0)$,
and $t\cdot \frac{\partial}{\partial y_i}$ if $(\alpha_i,\beta_i)=(1,1)$. 
Multiplying the chosen factors give $D_\bw$. Thus the expansion of the
left hand side is equal to the right hand side. 
\end{proof}

\begin{lemma}\label{lem3}
Let $\kk\in \Z^s_{\ge 0}$, $\ee\in \Z^{s-1}_{\ge 0}$
and $\bw\in W_{s-1}$. Then
\[
\left.\left( D_\bw\left(
\frac{(\xx * \yy)^{\kk*\ee}}{(\kk*\ee)!} \right) \right)\right|_{\yy =
0} =  \left\{
\begin{array}{ll}
t^{|\bw|}\cdot \frac{\xx^{d_\kk(\bw)}}{d_\kk(\bw)!}, 
& \mbox{if $\ee=\tilde\bw$ and $d_\bw(\kk)\in \Z^s_{\ge 0}$,}\\
  0, & \mbox{otherwise.}
\end{array}
\right.
\]
\end{lemma}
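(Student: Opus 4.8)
The plan is to exploit that $D_\bw$ is a product of partial derivatives in the pairwise-distinct variables $x_1,\dots,x_s,y_1,\dots,y_{s-1}$, which therefore commute, while the monomial factors as $(\xx*\yy)^{\kk*\ee}/(\kk*\ee)! = \frac{\xx^\kk}{\kk!}\cdot\frac{\yy^\ee}{\ee!}$ by the identities recorded in the Notation. Hence the whole computation decouples into one independent single-variable calculation per variable, and the output will be the product of these contributions together with the scalar $t^{|\bw|}$ arising from the factors $(t\cdot\partial/\partial y_i)^{\alpha_i\beta_i}$, since $\sum_i \alpha_i\beta_i = |\bw|$.

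First I would record, for each variable, the total order to which $D_\bw$ differentiates it. The variable $y_i$ is acted on only by the $i$th factor, to order $\alpha_i\beta_i\in\{0,1\}$. For $x_j$, I would collect the factor $(\partial/\partial x_i)^{1-\alpha_i}$ (which contributes when $i=j$) and the factor $(\partial/\partial x_{i+1})^{1-\beta_i}$ (which contributes when $i=j-1$); using the boundary conventions $\alpha_s=\beta_0=1$ to treat $j=1$ and $j=s$ uniformly, the total order in $x_j$ is $(1-\alpha_j)+(1-\beta_{j-1})$. By the definition of $d_\bw(\kk)=(k_1',\dots,k_s')$ this order equals exactly $k_j-k_j'$. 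This identification is the single place where an off-by-one slip is possible, so I would verify the two boundary cases $j=1$ and $j=s$ explicitly against the conventions.

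Next I would perform the two elementary single-variable computations. Differentiating $y_i^{e_i}/e_i!$ to order $\alpha_i\beta_i$ and then setting $y_i=0$ yields $1$ when $e_i=\alpha_i\beta_i$ and $0$ otherwise; since $\alpha_i\beta_i$ is the $i$th entry of $\tilde\bw$, the product over $i$ survives the substitution $\yy=0$ precisely when $\ee=\tilde\bw$, and vanishes otherwise. Differentiating $x_j^{k_j}/k_j!$ to order $k_j-k_j'$ gives $x_j^{k_j'}/k_j'!$ when $k_j'\ge 0$ and $0$ when $k_j'<0$; hence the product over $j$ equals $\xx^{d_\bw(\kk)}/d_\bw(\kk)!$ when $d_\bw(\kk)\in\Z^s_{\ge 0}$ and vanishes otherwise.

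Finally I would assemble the pieces: the scalar $t^{|\bw|}$, the $y$-contribution (which enforces $\ee=\tilde\bw$), and the $x$-contribution (which enforces $d_\bw(\kk)\in\Z^s_{\ge 0}$ and supplies $\xx^{d_\bw(\kk)}/d_\bw(\kk)!$) multiply to the stated formula. No step is genuinely difficult; the only care needed is the index bookkeeping of the second paragraph, and checking that the $y_i$-substitution produces the sharp constraint $\ee=\tilde\bw$ rather than a weaker one.
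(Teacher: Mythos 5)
Your proposal is correct and follows essentially the same route as the paper: both compute the order to which $D_\bw$ differentiates each variable (namely $2-\alpha_i-\beta_{i-1}$ in $x_i$ and $\alpha_i\beta_i$ in $y_i$), observe that the result vanishes exactly when $\ee\neq\tilde\bw$ or $d_\bw(\kk)\notin\Z^s_{\ge 0}$, and otherwise evaluate the product of single-variable derivatives directly. Your explicit check of the boundary cases $j=1$ and $j=s$ against the conventions $\alpha_s=\beta_0=1$ is a sensible addition but not a departure from the paper's argument.
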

\begin{proof}
Note that the order of $\frac{\partial}{\partial x_i}$ in $D_\bw$ is $2-\alpha_i - \beta_{i-1}$, and the order of 
$\frac{\partial}{\partial y_i}$ is $\alpha_i \beta_i$ for each $i=1,\cdots, s$.  
Thus 
\[
	\begin{array}{ccl}
	\displaystyle D_\bw\left( \frac{(\xx * \yy)^{\kk*\ee}}{(\kk*\ee)!}\right)\Big|_{\yy = 0} = 0 &  \Leftrightarrow &  e_i \neq \alpha_i \beta_i \quad \text{or}
	\quad k_i < 2 - \alpha_i - \beta_{i-1} ~\text{for some} ~1 \leq i \leq s\\
	& \Leftrightarrow &  e \neq \widetilde{\bw} \quad \text{or} \quad d_\bw(\kk) \not \in \Z^s_{\ge 0}. \\
	\end{array}
\]  
For the other case, we have 
\begin{eqnarray*}
	 D_\bw\left( \frac{(\xx * \yy)^{\kk*\ee}}{(\kk*\ee)!} \right) & = & D_\bw \left( \prod_{i=1}^s \frac{x_i^{k_i}}{k_i!} \prod_{j=1}^{s-1} \frac{y_j^{e_j}}{e_j!} \right)\\
	                                                                                           & = & \prod_{i=1}^s \frac{x_i^{k_i-2+\alpha_i + \beta_{i-1}}}{(k_i-2+\alpha_i + \beta_{i-1})!} \prod_{j=1}^{s-1} 
	                                                                                           		\frac{y_j^{e_j - \alpha_j \beta_j}}{(e_j - \alpha_j \beta_j)!} \cdot t^{\alpha_j \beta_j}\\[0.5em]
	                                                                                           & = & t^{|\bw|}\cdot \frac{\xx^{d_\bw(\kk)}}{d_\bw(\kk)!}.
\end{eqnarray*}
This completes the proof. 
\end{proof}

%
\subsection{Proof of the main theorem} We start with the following lemma.

\begin{lemma}\label{lem1}
For $\kk\in\Z^s_{\ge 1}$, we have
\begin{equation}\label{equation_recursion}
F_\kk(t) = \sum_{\bw\in W_{s-1}} F_{r_\bw(\kk) * \tilde\bw}(t) \cdot t^{|\bw|}.
\end{equation}
\end{lemma}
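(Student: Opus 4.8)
The plan is to prove the recursion by exhibiting a dimension-graded bijection between the faces of $\Gamma_\kk$ and the pairs $(\bw,\gamma')$ in which $\bw\in W_{s-1}$ records the local shape of the face at the terminal vertices and $\gamma'$ is a face of the reduced ladder diagram $\Gamma_\kk(\bw)=\Gamma_{r_\bw(\kk)*\tilde\bw}$ furnished by Lemma~\ref{lemma_next_terminal_vertices}. Recalling that $\FF_\kk(t)=\sum_{\gamma\in\F(\Gamma_\kk)}t^{\dim\gamma}$, once I have a bijection
\[
\F(\Gamma_\kk)\;\longleftrightarrow\;\bigsqcup_{\bw\in W_{s-1}}\F\big(\Gamma_{r_\bw(\kk)*\tilde\bw}\big),\qquad \gamma\longmapsto(\bw,\gamma'),
\]
with the property $\dim\gamma=\dim\gamma'+|\bw|$, the identity drops out by collecting terms:
\[
\FF_\kk(t)=\sum_{\bw\in W_{s-1}}\ \sum_{\gamma'\in\F(\Gamma_{r_\bw(\kk)*\tilde\bw})}t^{\dim\gamma'+|\bw|}=\sum_{\bw\in W_{s-1}}t^{|\bw|}\,\FF_{r_\bw(\kk)*\tilde\bw}(t).
\]
Thus the entire argument reduces to constructing the bijection and checking the dimension bookkeeping.

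To build the bijection I would first record the elementary fact that, since $\kk\in\Z^s_{\ge1}$, the only vertices of $\Gamma_\kk$ on the line $x+y=n$ are the terminal vertices $T_\kk$, and that every edge of $\Gamma_\kk$ joins two consecutive levels $x+y=\ell$ and $x+y=\ell+1$. Consequently, the edges of a face $\gamma$ that lie above the line $x+y=n-1$ are exactly the edges incident to $T_\kk$, and they are encoded precisely by the assignment $\bw=\mathcal{A}_\gamma\in W_{s-1}$ (with the forced shapes $\uparrow$ at $v_0$ and $\to$ at $v_s$). I would then define $\gamma'$ to be $\gamma$ with these top edges removed. Truncating each positive path of $\gamma$ at level $n-1$ yields a positive path of $\Gamma_\kk(\bw)$ ending at a terminal of $\Gamma_\kk(\bw)$, and the edges recorded by $\bw$ guarantee that every such terminal is attained; hence $\gamma'$ is a union of positive paths containing all terminal vertices of $\Gamma_\kk(\bw)$, i.e.\ $\gamma'\in\F(\Gamma_\kk(\bw))$. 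For the inverse I would send $(\bw,\gamma')$ to $\gamma'\cup g_\bw$ and extend each positive path of $\gamma'$ by the corresponding edge of $g_\bw$ to see that the union is again a face of $\Gamma_\kk$ whose assignment is $\bw$; since deleting and reattaching the top edges are manifestly opposite operations, the two maps are mutually inverse.

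The remaining and most delicate point is the dimension equality $\dim\gamma=\dim\gamma'+|\bw|$, which is exactly what produces the factor $t^{|\bw|}$. Because $\gamma$ and $\gamma'$ are both connected---each being a union of positive paths through the common origin---I would compute $\dim=\mathrm{rank}\,H_1=\#\text{edges}-\#\text{vertices}+1$ for each and compare. Passing from $\gamma'$ to $\gamma=\gamma'\cup g_\bw$ adjoins the $s+1$ terminal vertices of $\Gamma_\kk$ (all new, as they sit on the top level) together with the edges of $g_\bw$, whose lower endpoints are terminals of $\Gamma_\kk(\bw)$ and hence already present in $\gamma'$. Using $|E(g_\bw)|=2+\sum_{i=1}^{s-1}(\alpha_i+\beta_i)$ and the identity $\alpha_i+\beta_i=1+\alpha_i\beta_i$, the change in Euler characteristic becomes
\[
\mathrm{rank}\,H_1(\gamma)-\mathrm{rank}\,H_1(\gamma')=|E(g_\bw)|-(s+1)=\big(2+(s-1)+|\bw|\big)-(s+1)=|\bw|.
\]
Geometrically this says that each $\ur$ (each index $i$ with $\alpha_i\beta_i=1$) joins $v_i$ to two distinct, already-connected terminals of $\Gamma_\kk(\bw)$ and so creates exactly one new minimal cycle, while a $\to$ or an $\uparrow$ attaches $v_i$ by a single pendant edge and creates none. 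I expect the main obstacle to be the bookkeeping that $\gamma'$ really is a union of positive paths of the reduced diagram (rather than an arbitrary subgraph) and that the deletion/reattachment is compatible with the identification $\Gamma_\kk(\bw)=\Gamma_{r_\bw(\kk)*\tilde\bw}$ of Lemma~\ref{lemma_next_terminal_vertices}; granting this, the dimension identity is a routine Euler-characteristic count.
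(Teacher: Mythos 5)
Your proposal is correct and follows essentially the same route as the paper: the same decomposition $\gamma=\sigma\cup g_{\bw}$ with $\bw=\mathcal{A}_\gamma$, the same inverse map, and the same dimension shift $\dim\gamma=\dim\sigma+|\bw|$. Your Euler-characteristic count $\mathrm{rank}\,H_1(\gamma)-\mathrm{rank}\,H_1(\gamma')=|E(g_{\bw})|-(s+1)=|\bw|$ is a slightly more explicit justification of that shift than the paper's remark that each $\ur$ creates exactly one new cycle, but it is the same argument in substance.
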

\begin{proof}
Note that the left hand side of (\ref{equation_recursion}) is equal to 
\[
F_\kk(t) = \sum_{\gamma\in \mathcal{F}(\Gamma_\kk)} t^{\dim \gamma}
\]
by definition, and the right hand side of (\ref{equation_recursion}) is equal to 
\[
	\sum_{\bw\in W_{s-1}} F_{r_\bw(\kk) * \tilde\bw}(t) \cdot t^{|\bw|} = 
	\sum_{\substack{\bw\in W_{s-1} \\ \sigma \in \F(\Gamma_{r_\bw(\kk) * \widetilde{\bw}})}} t^{\dim \sigma} \cdot t^{|\bw|}. 
\]	
Let
\[
X = \{(\bw, \sigma): \bw\in W_{s-1}, 
\sigma \in \mathcal{F}(\Gamma_{r_\bw(\kk) * \tilde\bw}) \}.
\]
Then it is sufficient to find a bijection 
\[
\phi : X \to \mathcal{F}(\Gamma_\kk) 
\]
such that $\dim \phi(\bw,\sigma) = |\bw| + \dim \sigma$. 

Recall that the set of terminal vertices of $\sigma \in \F(\Gamma_{r_\bw(\kk) * \widetilde{\bw}})$ is equal to $V_{n-1}(g_\bw)$ where 
\[
	V_{n-1}(g_\bw) = V(g_\bw) \cap \{ (x,y) ~|~ x+y = n-1\}. 
\]
Thus we can define $\phi$ to be 
\[
	\phi(\bw, \sigma) := \sigma \cup g_\bw
\]
which is clearly a face of $\Gamma_\kk$. 

Conversely, any face $\gamma \in \F(\Gamma_\kk)$ contains $g_\bw$ where $\bw$ corresponds to the assignment $\mathcal{A}_\gamma$ and it can be decomposed 
into 
\[
	\gamma = \sigma \cup g_\bw
\]
where $\sigma \in \F(\Gamma_{r_\bw(\kk) * \widetilde{\bw}})$ is a full subgraph of $\gamma$ obtained from removing terminal vertices $T_\kk$ of $\gamma$.
Then it defines a map $\psi :  \mathcal{F}(\Gamma_\kk) \to X$ such that $\psi(\gamma) := (\sigma, \bw)$. It is clear that $\psi \circ \phi$ is the identity map 
on $X$.

Finally for every $(\bw, \sigma) \in X$, each $v_i$ with $(\alpha_i, \beta_i) = (1,1)$ generates exactly one cycle in $\phi(\bw, \sigma)$ containing $v_i$. 
Thus we have $\dim \phi(\bw, \sigma) = |\bw| + \dim \sigma$.  

\end{proof}

Note that 
		\[
\displaystyle\frac{\partial^s}{\partial x_1 \cdots \partial x_s} \Psi_s(\xx;t)  =
\displaystyle \left.\frac{\partial^s}{\partial x_1 \cdots \partial x_s} \Psi_{2s-1}(\xx * \yy ; t) \right|_{\yy=0}.\\
\]
Thus the following theorem is equivalent to our main theorem \ref{theorem_global_generating_function}. 

\begin{theorem}[Theorem \ref{theorem_global_generating_function}] Under the same assumption, 
	\[
\frac{\partial^s}{\partial x_1 \cdots \partial x_s} \Psi_s(\xx;t)
=\left.\left(\prod_{i=1}^{s-1}\left (\frac{\partial}{\partial x_i} + \frac{\partial}{\partial x_{i+1}}
		+ t \cdot \frac{\partial}{\partial y_i}\right )
                \right)\right|_{\yy = 0} \Psi_{2s-1}(\xx * \yy;t).
	\]
\end{theorem}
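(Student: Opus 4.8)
The plan is to prove the identity by expanding both sides as exponential generating functions in $\xx$ with coefficients in $\Q[t]$, and checking that the coefficient of $\xx^\bm/\bm!$ agrees for every $\bm\in\Z^s_{\ge 0}$. For the left-hand side, note that applying $\frac{\partial}{\partial x_i}$ annihilates any monomial independent of $x_i$ and sends $x_i^{k_i}/k_i!\mapsto x_i^{k_i-1}/(k_i-1)!$; hence differentiating $\Psi_s(\xx;t)=\sum_\kk\FF_\kk(t)\,\xx^\kk/\kk!$ once in each variable and substituting $\kk=\bm+\mathbf{1}$ gives
\[
\frac{\partial^s}{\partial x_1\cdots\partial x_s}\Psi_s(\xx;t)=\sum_{\bm\in\Z^s_{\ge 0}}\FF_{\bm+\mathbf{1}}(t)\,\frac{\xx^\bm}{\bm!},
\]
so the coefficient I must reproduce on the right is $\FF_{\bm+\mathbf{1}}(t)$.

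For the right-hand side, Lemma~\ref{lem2} decomposes the product operator as $\sum_{\bw\in W_{s-1}}D_\bw$, so I would expand $\Psi_{2s-1}(\xx*\yy;t)=\sum_{\kk,\ee}\FF_{\kk*\ee}(t)\,(\xx*\yy)^{\kk*\ee}/(\kk*\ee)!$ and apply each $D_\bw$ term by term before setting $\yy=0$. Lemma~\ref{lem3} then shows that only the terms with $\ee=\widetilde{\bw}$ and $d_\bw(\kk)\in\Z^s_{\ge 0}$ survive, each contributing $t^{|\bw|}\,\xx^{d_\bw(\kk)}/d_\bw(\kk)!$. The crucial step is to reindex by $\bm=d_\bw(\kk)$: since $d_\bw$ is an integer translation it is injective, and because $2-\alpha_i-\beta_{i-1}\ge 0$ the joint condition $\kk\in\Z^s_{\ge 0}$ and $d_\bw(\kk)\in\Z^s_{\ge 0}$ is equivalent to $\bm\in\Z^s_{\ge 0}$ with $\kk=r_\bw(\bm+\mathbf{1})$, the identity $d_\bw(r_\bw(\bm+\mathbf{1}))=\bm$ being exactly the content of Lemma~\ref{lem4}. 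This produces
\[
\left.D_\bw\,\Psi_{2s-1}(\xx*\yy;t)\right|_{\yy=0}=\sum_{\bm\in\Z^s_{\ge 0}}\FF_{r_\bw(\bm+\mathbf{1})*\widetilde{\bw}}(t)\,t^{|\bw|}\,\frac{\xx^\bm}{\bm!}.
\]

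Finally I would sum over $\bw\in W_{s-1}$ and interchange the two locally finite summations, so that the coefficient of $\xx^\bm/\bm!$ on the right becomes $\sum_{\bw\in W_{s-1}}\FF_{r_\bw(\bm+\mathbf{1})*\widetilde{\bw}}(t)\,t^{|\bw|}$. Since $\bm+\mathbf{1}\in\Z^s_{\ge 1}$, the recursion of Lemma~\ref{lem1} collapses this sum to $\FF_{\bm+\mathbf{1}}(t)$, matching the left-hand coefficient and completing the proof. I expect the main obstacle to be the reindexing step: one must verify that passing from $\kk$ to $\bm=d_\bw(\kk)$ loses no boundary terms when some $k_i$ is small and sweeps out all of $\Z^s_{\ge 0}$ exactly once, which is precisely what Lemma~\ref{lem4} secures; the remaining manipulations are routine bookkeeping of exponential-generating-function coefficients.
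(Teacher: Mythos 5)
Your proposal is correct and follows essentially the same route as the paper's proof: expand both sides as exponential generating functions, decompose the product operator via Lemma~\ref{lem2}, kill all but the $\ee=\widetilde{\bw}$ terms via Lemma~\ref{lem3}, reindex by $d_\bw$ using Lemma~\ref{lem4}, and collapse the resulting sum with the recursion of Lemma~\ref{lem1}. The only cosmetic difference is that the paper applies Lemma~\ref{lem1} to the left-hand side at the outset rather than at the end when matching coefficients.
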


\begin{proof}
By Lemma~\ref{lem1}, the left hand side is
\begin{equation}
  \label{eq:1}
\sum_{\kk\in \Z^s_{\ge 1}} F_\kk(t) \frac{\xx^{\kk-\mathbf1}}{(\kk-\mathbf1)!}
=\sum_{\kk\in \Z^s_{\ge 0}} F_{\kk+\mathbf1}(t) \frac{\xx^{\kk}}{\kk!}
=\sum_{\kk\in \Z^s_{\ge 0}}
\left(\sum_{\bw\in W_{s-1}}F_{r_\bw(\kk+\mathbf1) * \tilde\bw}(t) \cdot t^{|\bw|}\right)
\frac{\xx^{\kk}}{\kk!}.
\end{equation}
Observe that
\[
\Psi_{2s-1}(\xx * \yy;t) = 
\sum_{\substack{\kk\in \Z^s_{\ge 0}\\
\ee\in \Z^{s-1}_{\ge 0}}} 
F_{\kk*\ee}(t) \frac{(\xx * \yy)^{\kk*\ee}}{(\kk*\ee)!}.
\]
By Lemma~\ref{lem2} and the above identity, the right hand side of the theorem is
\[
\left.\left(\sum_{\bw\in W_{s-1}} D_\bw
\left(\Psi_{2s-1}(\xx * \yy)\right)  \right)\right|_{\yy = 0}
=\sum_{\bw\in W_{s-1}} 
\sum_{\substack{\kk\in \Z^s_{\ge 0}\\
\ee\in \Z^{s-1}_{\ge 0}}}  F_{\kk*\ee}(t) 
\left( D_\bw
\left.\left(\frac{(\xx * \yy)^{\kk*\ee}}{(\kk*\ee)!}
\right)\right)\right|_{\yy = 0}.
\]
By Lemma~\ref{lem3}, this is equal to
\[
\sum_{\bw\in W_{s-1}} 
\sum_{\substack{\kk\in \Z^s_{\ge 0}\\
d_\bw(\kk)\in \Z^s_{\ge 0}}}  F_{\kk*\tilde\bw}(t)  \cdot
t^{|\bw|}\cdot \frac{\xx^{d_\bw(\kk)}}{d_\bw(\kk)!}.
\]
Note that $d_\bw(\kk) \in \Z^s_{\geq 0}$ implies $\kk  = r_\bw(d_\bw(\kk) + \mathbf{1}) \in Z^s_{\geq 0}$ by Lemma by Lemma~\ref{lem4}.
Thus by letting $\kk' = d_\bw(\kk)$, the above sum becomes
\[
\sum_{\bw\in W_{s-1}} 
\sum_{\kk'\in \Z^s_{\ge 0}}  F_{r_\bw(\kk'+1)*\tilde\bw}(t)  \cdot
t^{|\bw|}\cdot \frac{\xx^{\kk'}}{(\kk')!},
\]
which is equal to \eqref{eq:1}. This completes the proof. 
\end{proof}

\bibliographystyle{annotation}

\begin{thebibliography}{99}
	\bibitem[Au]{Au} M. Audin, {\em Topology of Torus actions on symplectic manifolds}, Second revised edition. Progress in Mathematics, 
			\textbf{93}. Birkh\"{a}user Verlag, Basel (2004).

	\bibitem [BCKV]{BCKV} V. Batyrev, I. Ciocan-Fontanine, B. Kim, and D. Van Straten, {\em Mirror symmetry and toric degenerations of partial flag manifolds},
			Acta Math., \textbf{184}(1) (2000), 1-39.

	\bibitem[GC]{GC} I.M. Gelfand, M.L. Cetlin, {\em Finite dimensional representations of the group of unimodular matrices}, 
	Dokl. Akad. Nauk USSR (N.S.) \textbf{71} (1950) 825-828.


		
	\bibitem[GKT]{GKT} P. Gusev, V. Kiritchenko, and V. Timorin, {\em Counting vertices in Gelfand-Zetlin polytopes}, J. Combin. Theory Ser. A  \textbf{120}
	  (2013),  no. 4, 960-969. 
	
	\bibitem[GS]{GS} V. Guillemin and S. Sternberg, {\em The Gel'fand-Cetlin system and quantization of the complex flag manifolds,}  
			J. Funct. Anal. \textbf{52} (1983) no. 1,  106-128.
	
	\bibitem[HK]{HK} M. Harada and K. Kaveh, {\em  Integrable systems, toric degenerations and Okounkov bodies}, Invent. Math. \textbf{202}  (2015),  no. 3, 927-985.
	\bibitem[Ki]{Ki} V. Kiritchenko, {\em Gelfand-Zetlin polytopes and flag varieties}, Int. Math. Res. Not. \textbf{13}  (2010) 2512-2531.				

	\bibitem[KM]{KM} M. Kogan and E. Miller, {\em Toric degeneration of Schubert varieties and Gelfand-Tsetlin polytopes}, 
	 Adv. Math \textbf{193}  no. 1 (2005), 1-17.
	
	\bibitem[Ko]{Ko} M. Kogan, {\em Schubert geometry of flag varieties and Gelfand-Cetlin theory},  Ph.D Thesis, MIT, 2000.

	\bibitem[KST]{KST} V. Kirichenko, E. Smirnov, and V. Timorin, {\em Schubert calculus and Gelfand-Tsetlin polytopes}. (Russian)  
	Uspekhi Mat. Nauk  \textbf{67} (2012),  no. 4(406), 89--128;  translation in  Russian Math. Surveys  67  (2012),  no. 4, 685-719 

	\bibitem[LMc]{LMc} J. Loera, T. McAllister, {\em Vertices of Gelfand-Tsetlin polytopes}, Discrete Comput. Geom. \textbf{32} (4) (2004) 459-470.
			
	\bibitem[NNU]{NNU} T. Nishinou, Y. Nohara, and K. Ueda, {\em Toric degenerations of Gelfand-Cetlin systems and potential functions}, 
			Adv. Math. {\bf{224}}(2) (2010), 648-706.
	
	
	
\end{thebibliography}

\end{document}